\newtheorem{thm}{Theorem}[section]
\newtheorem{lem}{Lemma}[section]
\newtheorem{coro}{Corollary}[section]
\numberwithin{equation}{section}
\theoremstyle{theorem}
{\vskip 0.5cm}
\newtheorem{rque}{\textbf{Remark}}[section]{\vskip 0.5cm} 
\newtheorem*{ack}{\textbf{Acknowledgements}}{\vskip 0.5cm}
\newcommand{\T}{{\mathbb T}}
\newcommand{\R}{{\mathbb R}}
\newcommand{\pa}{{\partial}}
\newcommand{\na}{{\nabla}}
\newcommand{\eps}{{\varepsilon}}
\newcommand{\Hc}{\mathcal{H}}
\newcommand{\Oc}{\mathcal{O}}
\renewcommand{\a}{\alpha}
\title[From Vlasov-Poisson to KdV and ZK]{From Vlasov-Poisson to Korteweg-de Vries and Zakharov-Kuznetsov}
\author{Daniel Han-Kwan}\address{\'Ecole Normale Sup\'erieure, D\'epartement de Math\'ematiques et Applications,  45 rue d'Ulm, 75230 Paris Cedex 05 France}\email{daniel.han-kwan@ens.fr}
\date{\today}
\begin{document}
 \maketitle
 
 \begin{abstract}
 We introduce a long wave scaling for the Vlasov-Poisson equation and derive, in the cold ions limit, the Korteweg-De Vries equation (in 1D) 
  and the Zakharov-Kuznetsov equation (in higher dimensions, in the presence of an external magnetic field).
 The proofs are based on the relative entropy method.
 \end{abstract}

\section{The long wave scaling of the Vlasov-Poisson equation}



\subsection{The Vlasov-Poisson system for ions with small mass electrons}

We consider the Vlasov-Poisson system which describes the evolution of ions in a plasma. 
We assume that due to their small mass, electrons in the plasma instantaneously reach their thermodynamic equilibrium, so that their density $n_e$ follows the Maxwell-Boltzmann law:
\begin{equation}
n_e = e^{{\phi}/{T_e}},
\end{equation}
denoting by $\phi$ the electric potential and $T_e>0$ the scaled temperature of electrons, taken equal to $1$ in the following. We then obtain the Vlasov-Poisson system for ions (where $t\geq0, x \in \T^d \text{  or  }  \R^d, v \in \R^d$, $d=1,2,3$):
\begin{equation}
\label{Vlasov}
\left\{
    \begin{aligned}
&  \partial_t f + v \cdot \nabla_x f + \left(E + v \wedge b\right) \cdot \nabla_v f =0,  \\
&  E = -\nabla_x \phi, \\
&  -\Delta_x \phi +  e^{\phi } = \int_{\R^d}  f \, dv,\\
&    f_{\vert t=0} =f_0.\\
    \end{aligned}
  \right.
\end{equation}
We refer to \cite{HK11} for a more complete discussion on this system. In these equations, $f(t,x,v)$ stands for the \emph{distribution function} of the ions, which allows to describe their statistical distribution: this means that $f(t,x,v) \, dx \,dv$ expresses the density of ions, at time $t$, whose position is close to $x$ and whose velocity is close to $v$. Furthermore, with usual notations, $E$ is the electric field (generated by electrons and ions themselves), while $b$ is a fixed external magnetic field.

It is interesting to notice that for a very particular class of (singular) data, namely Dirac functions in velocity, which we shall call \emph{monokinetic} data,
\begin{equation}
\label{mono}
f(t,x,v) = \rho(t,x) \delta_{v = u(t,x)},
\end{equation}
where $\delta$ denotes the Dirac delta function, we get that $f$ is a solution in the sense of distributions to \eqref{Vlasov} (with some relevant electric field) if and only if $(\rho,u)$ satisfies the following hydrodynamic system, which corresponds to the \emph{pressureless} Euler-Poisson system: 
\begin{equation}
\label{EP}
\left\{
    \begin{aligned}
&  \partial_t \rho + \na_x \cdot ( \rho u) =0,  \\
& \pa_t u + u \cdot \na_x u = E + u \wedge b,  \quad u=(u_1,u_2,u_3),\\
&  E = -\nabla_x \phi, \\
&  -\Delta_x \phi +  e^{\phi } = \rho.\\
    \end{aligned}
  \right.
\end{equation}
From the physical point of view, this corresponds to the \emph{cold ions} assumption, that corresponds to the situation where ions have zero (kinetic) temperature: any function of the form \eqref{mono} indeed satisfies
\begin{equation}
\int f(t,x,v) |v - u(t,x)|^2 \, dv =0.
\end{equation}

We also mention that a standard model in plasma physics is obtained after linearizing the Maxwell-Boltzmann law, which yields from \eqref{Vlasov} the following system:
\begin{equation}
\left\{
    \begin{aligned}
&  \partial_t f + v \cdot \nabla_x f + E \cdot \nabla_v f =0,  \\
&  E = -\nabla_x \phi, \\
&  -\Delta_x \phi +  \phi = \int_{\R^d}  f \, dv -1,\\
&    f_{\vert t=0} =f_0.\\
    \end{aligned}
  \right.
\end{equation}
This linearization produces some considerable simplifications in the analysis.

\subsection{The long wave scaling}

In some recent independent works, Lannes, Linares and Saut \cite{LLS} in the first hand, and Guo and Pu \cite{GP,PU} in the other hand, have rigorously studied the \emph{long wave limit} of the pressureless Euler-Poisson system. Precisely, one looks for solutions to \eqref{EP} under the form:
\begin{equation}
\label{ansatz}
\left\{
\begin{aligned}
& \rho=1+  \eps \rho_1 (\eps^{1/2}(x_1-t), \eps^{1/2} x_2 , \eps^{1/2} x_3, \eps^{3/2} t) + \eps^2 \rho_2 + ..., \\
& \phi= \eps \phi_1 (\eps^{1/2}(x_1-t), \eps^{1/2} x_2 , \eps^{1/2} x_3, \eps^{3/2} t) + \eps^2 \phi_2 + ..., \\
& u_1= \eps^{1/2} u_1^{(1)} (\eps^{1/2}(x_1-t), \eps^{1/2} x_2 , \eps^{1/2} x_3, \eps^{3/2} t) + \eps u_1^{(2)} + ..., \\
& u_2= \eps^{1/2} u_2^{(1)} (\eps^{1/2}(x_1-t), \eps^{1/2} x_2 , \eps^{1/2} x_3, \eps^{3/2} t) + \eps u_2^{(2)} + ..., \\
& u_3= \eps^{1/2} u_3^{(1)} (\eps^{1/2}(x_1-t), \eps^{1/2} x_2 , \eps^{1/2} x_3, \eps^{3/2} t) + \eps u_3^{(2)} + ..., \\
\end{aligned}
  \right.
\end{equation}

This leads to the study of the behaviour, as the parameter $\eps$ goes to $0$, of the solutions to the rescaled system:
\begin{equation}
\label{EPsca}
\left\{
    \begin{aligned}
&  \eps \partial_t \rho - \pa_{x_1} \rho + \na_x \cdot ( (1+ \eps \rho) u) =0,  \\
&\eps \pa_t u   -  \pa_{x_1} u+ \eps u \cdot \na_x u = E + \eps^{-1/2} u \wedge b,  \quad u=(u_1,u_2,u_3),\\
&  E = -\nabla_x \phi, \\
&  -\eps^2\Delta_x \phi +  e^{\eps\phi } = 1 + \eps \rho,\\
    \end{aligned}
  \right.
\end{equation}
In the limit, one expects to obtain some (non-linear) dispersive equations, as explained in the papers by Zakharov-Kuznetsov \cite{ZKuz} and Laedke-Spatschek \cite{LSpa}. Indeed, in $1D$, both \cite{LLS} and \cite{GP}  establish the convergence to the Korteweg-de Vries (KdV) equation, which we recall below:
\begin{equation}
 \partial_t \phi_1 + \phi_1 \pa_x \phi_1 + \pa^3_{xxx} \phi_1 = 0.
\end{equation}

In $2D$ and $3D$, which corresponds  to the setting of \eqref{EPsca},  \cite{LLS} and \cite{PU} give the derivation of a higher dimensional generalization of the KdV equation, which is referred to as the Zakharov-Kuznetsov (in short ZK) equation:
\begin{equation}
\partial_t \phi_1 +  \phi_1 \pa_x \phi_1 + \pa_{x_1}\Delta  u_1 = 0.
\end{equation}

With another kind of anisotropic (in space) scaling, Pu has also derived in $2D$ the Kadomstev-Petviashvili II  (in short KP-II) equation \cite{PU}:
\begin{equation}
 \pa_{x_1} \left( \partial_t \phi_1 +  \phi_1 \pa_{x_1} \phi_1+ \pa^3_{x_1 x_1 x_1 } \phi_1\right) + \pa^2_{x_2 x_2} \phi_1 = 0.
\end{equation}

Over the past years, there have been many mathematical studies of long wave limits towards KdV (and higher dimensional generalizations). Let us cite some works (this list is not meant to be exhaustive) concerning the following PDEs:
\begin{itemize}
\item \emph{Nonlinear Schrödinger (including Gross-Pitaevskii) equations}: Chiron and Rousset \cite{CR},  and B\'ethuel, Gravejat, Saut and Smets \cite{BGSS1,BGSS2} (with different methods),
\item \emph{General Hyperbolic systems}: Ben-Youssef and Colin \cite{BYC}, Ben-Youssef and Lannes \cite{BYL},
\item \emph{Water Waves}:  Craig \cite{CRA}, Schneider and Wayne \cite{SW}, Bona, Colin and Lannes \cite{BCL}, Alvarez-Samaniego and Lannes \cite{ASL} , Duch{e}ne \cite{DUC,DUC2},
\end{itemize}
and many others.

The fact that there exist very singular solutions to the Vlasov-Poisson system \eqref{Vlasov} which precisely yield the pressureless Euler-Poisson system \eqref{EP} suggests that is should also be possible to study \eqref{Vlasov} in a long wave regime. Following this idea, we would like to look for solutions of the form:
\begin{equation}
\left\{
\begin{aligned}
&f_\eps(t,x,v) =\eps^d \tilde f_\eps(\eps^{3/2}t, \eps^{1/2} (x_1-t),  \eps^{1/2} x_2 , \eps^{1/2} x_3, \eps^{-1} v),\\
&\phi_\eps(x,v)=  \eps \tilde\phi_\eps ( \eps^{3/2} t, \eps^{1/2}(x_1-t), \eps^{1/2} x_2 , \eps^{1/2} x_3) .
\end{aligned}
  \right.
\end{equation}
The normalization is chosen in order that the scaling of the two first moments $\rho_\eps := \int f_\eps \, dv$ and $u_\eps := \frac{\int f_\eps v \, dv}{ \int f_\eps \, dv}$ matches with the Ansatz in \eqref{ansatz}.

At some point, we will also have to somehow impose that the function $\tilde f_\eps$ is ``close'' to a Dirac function, in order to reach the Euler-Poisson dynamics.

Therefore, we propose a long wave scaling for the Vlasov-Poisson equation (in dimension $d=3$), which we introduce now:
\begin{equation}
\left\{
\begin{aligned}
&\tilde{t}= \eps^{3/2} t, \quad \tilde{x} = \eps^{1/2} x,  \quad \tilde{v} = \eps^{-1} v, \\
&\tilde{f}(\tilde{t}, \tilde{x}, \tilde{v}) = \eps^{-3} f(t,x,v), \\
&\tilde{\phi}(\tilde{t}, \tilde{x}) = \eps^{-1} \phi(t,x), \\
&\tilde{E}(\tilde{t}, \tilde{x}) = \eps^{-3/2} E(t,x).
\end{aligned}
  \right.
\end{equation}
With a slight abuse of notation (we forget the tildes), we obtain the rescaled equations: 
\begin{equation}
\left\{
    \begin{aligned}
& \eps^{3/2}   \partial_t f +  \eps^{3/2}  v \cdot \nabla_x f + \eps^{-1}  \left(\eps^{3/2}  E + \eps \, v \wedge b\right) \cdot \nabla_v f =0,  \\
&  E = - \nabla_x \phi, \\
&  -\eps^2 \Delta_x \phi +  e^{\eps \phi } =  \int_{\R^3}  f \, dv,\\
&    f_{\vert t=0} =f_0.\\
    \end{aligned}
  \right.
\end{equation}

Finally, there only remains to perform the shift with respect to the first spatial variable:
\begin{equation}
\left\{
    \begin{aligned}
&\overline{x}_1 = x_1-t, \\
&\overline{f}(t,\overline{x}_1,x_2,x_3,v) = f(t,x,v), \\
&\overline\phi(t,\overline{x}_1,x_2,x_3) = \phi(t,\overline{x}_1,x_2,x_3), \\
&\overline{E}(t,\overline{x}_1,x_2,x_3) = E(t,\overline{x}_1,x_2,x_3).
    \end{aligned}
  \right.
\end{equation}
With another abuse of notation, we end up with the rescaled Vlasov-Poisson system:
\begin{equation}
\label{syst}
\left\{
    \begin{aligned}
&  \eps \,  \partial_t f_\eps  - \pa_{x_1} f_\eps + \eps \, v  \cdot \na_{x} f_\eps + \left(E_\eps + \frac{v\wedge b}{\sqrt{\eps}} \right)\cdot  \na_v f_\eps =0,  \\
&  E_\eps = -\nabla_x \phi_\eps, \\
&  -\eps^2 \Delta_x \phi_\eps +  e^{\eps \phi_\eps} = \int_{\R^3}  f_\eps \, dv,\\
&    f_{\eps,\vert t=0} =f_{\eps,0}.\\
    \end{aligned}
  \right.
\end{equation}

To the best of our knowledge, although this scaling seems very natural, it has never been introduced yet, even in the physics literature.
Our goal in this work is to study the behaviour, as $\eps \rightarrow 0$, of solutions to \eqref{syst}, that also get close (in some sense that we shall precise later) at initial time to monokinetic data. According to the previous discussion, it is natural to expect to obtain the ZK equation in the limit.

\bigskip

The relations between the different systems are summarized in the following diagram:

\bigskip

\begin{tikzpicture}
\draw
(0,0) node[inner sep=10pt,draw] (a) {Vlasov-Poisson}
(3,0.5) node {Cold ions limit}
(7,0) node[inner sep=10pt,draw] (b) {Pressureless Euler-Poisson}
(1.5,-1.5) node {Combined}
 (1.5,-2) node {cold ions and long wave limit}
(9,-1.5) node {Long wave limit}
(7,-3) node[inner sep=10pt,draw] (c) {KdV or ZK};
\draw[->] (a) -- (b);
\draw[->] (a) -- (c);
\draw[->] (b) -- (c);
\end{tikzpicture}

\bigskip

Let us briefly comment on this picture. As already explained, the link between Vlasov-Poisson and Pressureless Euler-Poisson is given by monokinetic type of data. Unfortunately, these data are way too singular to fit in the known Cauchy theories (although some results for measure data are actually available in $1D$, we refer to the work of Zheng and Majda \cite{ZM}). As a matter of fact, we are not even aware of any result proving rigorously the convergence to the pressureless Euler-Poisson system for data that would converge in some sense to monokinetic data; the stability estimates that would be needed are indeed missing.

As already said, the long wave limit from the Pressureless Euler-Poisson system has been performed in \cite{LLS, GP, PU}.
One important step is to build a solution in an interval of time which is independent of $\eps$. In order to study the long wave limit of the Vlasov-Poisson equation, instead of trying to derive the Pressureless Euler-Poisson system, the idea is to perform \emph{simultaneously} the cold ions and long wave limits. We shall start from global weak solutions to the Vlasov-Poisson equation, and therefore we will not have to face the difficulty of finding uniform lifespans. 

To prove such a result, we shall rely on a classical energy method, namely the relative entropy method. The idea originates in the work of Yau  \cite{Yau} on the hydrodynamic limit of some Ginzburg-Landau equation. It was independently brought in kinetic theory by Golse in \cite{BGP} (in the context of the incompressible Euler limit of the Boltzmann equation) and by Brenier in \cite{Br00} (in the context of the quasineutral limit of the Vlasov-Poisson equation).

The basic principle of the relative entropy strategy consists in \emph{modulating} some well-chosen functional that has to be conserved or dissipated by the physical system (for instance, the good choice is the entropy for the Boltzmann equation). The modulation is obtained in terms of the solution to the target equation. One has to ensure that this new functional allows to ``measure'' in a certain sense the distance between the solution to the original system and that to the target equation.
Then one has to prove that the functional that has been constructed is a Lyapunov function for the system: this follows from exact computations and algebraic identities. The computations can be more or less lengthy and tedious. It can be also very technical to justify these.

The estimates which can be obtained have to be understood as \emph{stability} estimates: for instance, the results proved in this work can be interpreted as the stability of monokinetic data, in the long wave regime, with a dynamics dictated by some the KdV or ZK equations. This also strongly suggests that for our long wave limit, there are stability phenomena (a la Penrose) at stake, exactly like for the case of the quasineutral limit (the effects of instabilities for the latter limit are briefly discussed for instance in \cite{Gr}); more precisely, two stream instabilities are bound to destabilize the system and make the long wave limit fail (but these are avoided when one considers monokinetic data). On the topic of instabilities for the Vlasov-Poisson system, we refer to \cite{GS1,GS2,GS3,Lin1,Lin2,MV,LZ1,LZ2}.

For the Vlasov equation, this method is remarkably well adapted to handle the cold ions limit, in other words the ``convergence'' to monokinetic data, as it will be clear later. This was observed for the study of the quasineutral limit of the Vlasov-Poisson system for electrons with fixed ions, as done by Brenier \cite{Br00} (this was completed later by Masmoudi \cite{Mas}, see also \cite{GSR}). More recently, in \cite{HK11}, we have studied the quasineutral limit of the Vlasov-Poisson for ions with small mass electrons (which corresponds to \eqref{Vlasov}).
In that work, we have observed that this equation displays  a $L \log L$ structure that is reminiscent of that of the Boltzmann equation (we refer to the works of Saint-Raymond \cite{SR1,SR2}, \cite{SRbook} for the incompressible Euler limit, see also the recent paper of Allemand \cite{All}); this will also play a crucial role in this work.

It is worth noticing that the method provided in this paper can also be applied to study the KdV limit of the Euler-Poisson system, for data with only low uniform regularity. Indeed we can start from the global weak solutions built by Cordier and Peng \cite{CP} and use similar computations as in the present paper. 

To conclude this introduction, let us mention that Haragus, Nicholls and Sattinger in \cite{HNS} relied on the KdV approximation of the Euler-Poisson system to study (formally and numerically) the interaction of solitary waves. It would be very interesting to start an analogous program for the Vlasov-Poisson equation.


\subsection{Organization of the paper} 
The paper is organized as follows: in Section \ref{sec2}, we provide the derivation of the KdV equation (see Theorem \ref{KdV}), starting from the $1D$ Vlasov-Poisson equation with a linearized Maxwell-Boltzmann law. The exposure of this relatively simple case will allow us to lay down the basic principles of the relative entropy method applied to the long wave limit. In addition to this pedagogical interest, the existence of global solutions to the KdV equation allows to give stability estimates which are valid for all times. In Section \ref{sec3}, we present the main result of this paper, which is the derivation of the ZK equation, in Theorem \ref{ZK}, starting from the $3D$ Vlasov-Poisson equation with the full Maxwell-Boltzmann law. The proof will be much more technical, in particular due to the fact that only a $L \log L$ type of control is available for the electric potential (instead of a $L^2$ bound which can be obtained with a linearized Maxwell-Boltzmann law). We will need an unusually large number of correctors in the relative entropy. Finally, we give in two appendices some variants of our  results (which can still be obtained with the relative entropy method): in particular we present another scaling for the $2D$ Vlasov-Poisson system which yields the KP-II equation in the long wave limit.

\section{From the Vlasov-Poisson equation to the Korteweg-de Vries equation}
\label{sec2}

In this section, we shall study the long wave limit of the $1D$ Vlasov-Poisson system with a linearized Maxwell-Boltzmann law, that is
(here $(x,v) \in \T \times \R$):
\begin{equation}
\label{VP1D}
\left\{
    \begin{aligned}
&  \eps \,  \partial_t f_\eps  - \pa_x f_\eps + \eps \, v  \pa_x f_\eps + E_\eps  \pa_v f_\eps =0,  \\
&  E_\eps = - \pa_x \phi_\eps, \\
&  -\eps^2 \pa^2_{xx} \phi_\eps +  \eps \phi_\eps = \int_{\R}  f_\eps \, dv -1,\\
&    f_{\eps, \vert t=0} =f_{\eps,0}.\\
    \end{aligned}
  \right.
\end{equation}

\subsection{Preliminaries} 
This system possesses an energy, which is conserved, at least formally:

\begin{equation}\begin{aligned}
\mathcal{E}_\eps (t) &:= \frac{1}{2} \int f_\eps | v |^2   \, dv \, dx + \frac{1}{2} \eps \int | \pa_x \phi_\eps|^2 \,dx + \frac{1}{2}  \int | \phi_\eps|^2 \, dx.
\end{aligned}
\end{equation}

Using this energy, as well as the conservation of $L^p_{x,v}$ norms that can be obtained using the hamiltonian structure of the Vlasov equation, one can prove, following the work of Arsenev \cite{Ar}, the following theorem, which states the existence of global weak solutions to \eqref{VP1D}:
\begin{thm}
\label{Cauchy1D}
Let $\eps>0$. Let $f_{\eps,0} \in L^1\cap L^\infty(\T \times \R)$ be a non-negative function such that the initial energy is bounded:
\begin{equation}
\mathcal{E}_\eps (0) := \frac{1}{2} \int f_{\eps,0}| v |^2   \, dv \, dx + \frac{1}{2} \eps \int | \pa_x \phi_{\eps,0}|^2 \,dx + \frac{1}{2}  \int | \phi_{\eps,0}|^2 \, dx < + \infty,
\end{equation}
where the initial electric potential $\phi_{\eps,0}$ is given by the elliptic equation:
$$
-\eps^2 \pa^2_{xx} \phi_{\eps,0} +  \eps \phi_{\eps,0} = \int_{\R}  f_{\eps,0} \, dv -1 .
$$
We also assume that:
$$
\int f_{\eps,0} \, dv \, dx =1.
$$
Then there exists a non-negative global weak solution $f_\epsilon \in  L^\infty_t(L^1\cap L^\infty(\T \times \R))$ to \eqref{VP1D}, such that the energy is non-increasing:
\begin{equation}
\forall t \geq t', \quad \mathcal{E}_\eps(t) \leq \mathcal{E}_\eps(t'),
\end{equation}
and such that the following local conservation laws for $(\rho_\eps:=\int f_\eps \, dv, J_\eps := \int f_\eps v \, dv)$ are satisfied: 
\begin{equation}
\label{LC1}
\partial_t \rho_\eps  - \frac{1}{\eps} \pa_x \rho_\eps + \pa_x  J_\eps=0,
\end{equation}
\begin{multline}
\label{LC2}
\partial_t J_\eps   - \frac{1}{\eps} \pa_x J_\eps +  \pa_x \left(\int |v|^2 f_\epsilon \, dv \right) =  
 -\frac{1}{2} \pa_x (\phi_\eps+1)^2 
 +  \frac{\eps}{2}  \pa_x \vert \pa_x V_\eps \vert^2.
\end{multline}
\end{thm}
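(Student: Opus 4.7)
The plan is to follow Arsenev's classical construction of global weak solutions to Vlasov--Poisson~\cite{Ar}, adapted to the screened Poisson coupling appearing in \eqref{VP1D}. Fix $\eps>0$ throughout; all constants may depend on $\eps$. The key structural observation is that the Poisson equation $-\eps^2 \pa_{xx}^2 \phi_\eps + \eps\phi_\eps = \rho_\eps - 1$ is a \emph{linear screened} elliptic equation on $\T$, explicitly inverted by $\phi_\eps = K_\eps \ast (\rho_\eps - 1)$ for a smooth Green kernel $K_\eps$. Elliptic regularity combined with the one-dimensional Sobolev embedding $H^2(\T) \hookrightarrow W^{1,\infty}(\T)$ yields $\|\phi_\eps\|_{W^{1,\infty}} \lesssim \|\rho_\eps\|_{L^2}$, so any $L^p$ control on $\rho_\eps$ with $p \ge 2$ translates into $L^\infty$ control on the force $E_\eps$. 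This smoothing property is what makes the present one-dimensional linearized setting much simpler than the full three-dimensional case treated later.

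First I would build smooth approximate solutions. Regularize the initial data into a sequence $f^n_{\eps,0}$ of smooth, nonnegative, compactly $v$-supported functions of unit mass and uniformly bounded initial energy, and compose the map $\rho \mapsto E_\eps$ with a spatial mollifier $\chi_n$ to obtain a globally Lipschitz force. For the resulting regularized system the characteristic equations $\dot x = v - 1/\eps$, $\dot v = E^n_\eps(t,x)/\eps$ have Lipschitz coefficients, and a Picard fixed-point argument on characteristics produces a smooth global-in-time solution $f^n_\eps$. At this regularized level every formal identity is rigorous: the mass $\int f^n_\eps\,dv\,dx = 1$ is preserved, all $L^p_{x,v}$ norms of $f^n_\eps$ are preserved by the divergence-free Hamiltonian structure of the flow, and testing the regularized Vlasov equation against $|v|^2/2$ and using the regularized Poisson equation gives the exact energy identity $\mathcal{E}^n_\eps(t) = \mathcal{E}^n_\eps(0)$. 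These bounds are uniform in $n$.

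Next I would pass to the limit $n\to\infty$. The uniform bounds give weak-$*$ subsequential convergence of $f^n_\eps$ in $L^\infty_t(L^1\cap L^\infty)_{x,v}$ to some $f_\eps$. The conserved energy controls $\int |v|^2 f^n_\eps\,dv\,dx$, and the standard kinetic interpolation $\rho(x) \lesssim \|f(x,\cdot)\|_{L^\infty_v}^{2/3}\bigl(\int |v|^2 f(x,v)\,dv\bigr)^{1/3}$ yields uniform $L^3_x$ control on $\rho^n_\eps$; through the smoothing Green kernel this in turn gives uniform $W^{1,\infty}_x$ bounds on $\phi^n_\eps$ and $E^n_\eps$. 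To identify the limit in the nonlinear term $E^n_\eps\,\pa_v f^n_\eps$ one needs \emph{strong} compactness of $E^n_\eps$, which I would obtain by combining the continuity equation \eqref{LC1} on $\rho^n_\eps$ (giving equicontinuity in time in $H^{-1}_x$) with the spatial $L^p$ bounds via Aubin--Lions, and then transferring the resulting strong compactness of $\rho^n_\eps$ through the smoothing kernel $K_\eps$. The energy inequality for the limit follows from weak lower semicontinuity of each nonnegative quadratic term in $\mathcal{E}_\eps$.

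Finally, the local conservation laws \eqref{LC1}--\eqref{LC2} are obtained at the regularized level by testing the Vlasov equation against $1$ and $v$ in velocity: the zeroth moment is immediate, and the first moment produces a right-hand side of the form $-E^n_\eps \rho^n_\eps$, which one recasts as a full spatial divergence by using the screened Poisson equation to express $\rho^n_\eps$ through $\phi^n_\eps$ and $\pa_{xx}^2 \phi^n_\eps$ (this is the usual Maxwell-type stress computation); the strong convergence of $E^n_\eps$ is exactly what is needed to pass to the limit in the resulting quadratic expressions in $\phi^n_\eps$ and $\pa_x \phi^n_\eps$. I expect the main obstacle to be precisely the strong compactness of $\rho^n_\eps$ (equivalently, of $E^n_\eps$) in a topology strong enough to pass to the limit in the quadratic nonlinearity; everything else is either exact algebra or routine weak-compactness, enabled throughout by the linear smoothing nature of the screened Poisson coupling (a luxury that will be unavailable for the full nonlinear exponential coupling treated in Section~\ref{sec3}).
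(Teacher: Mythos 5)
Your proposal is correct and follows exactly the route the paper indicates for this statement: the paper gives no detailed proof of Theorem \ref{Cauchy1D} but states that it is obtained by adapting Arsenev's construction \cite{Ar}, using the conserved energy and the conservation of $L^p_{x,v}$ norms, which is precisely your scheme of regularization, uniform bounds (mass, $L^p$, energy, velocity-moment interpolation), compactness, and passage to the limit, with the local conservation laws \eqref{LC1}--\eqref{LC2} recovered by taking velocity moments and rewriting $E_\eps\rho_\eps$ as a spatial derivative via the screened Poisson equation. The only point worth flagging is that your correctly identified main obstacle (strong compactness of $\rho^n_\eps$, hence of $E^n_\eps$) is indeed the crux, and it is resolved exactly as you say, by the $\eps$-dependent smoothing of the screened kernel combined with time equicontinuity from the continuity equation.
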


For the KdV equation, that we recall below,
\begin{equation}
 \partial_t \phi_1 + \phi_1 \pa_x \phi_1 + \pa^3_{xxx} \phi_1 = 0,
\end{equation}
we have in hand a famous global existence result which was proved in the seminal paper of Bourgain \cite{Bour2}:
\begin{thm}
\label{KdVcauchy}
The KdV equation is globally well-posed in $H^s(\T)$ for $s\geq 0$.
\end{thm}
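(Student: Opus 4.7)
The plan is to follow Bourgain's Fourier restriction method and reduce global well-posedness to (i) a local well-posedness statement obtained via a fixed-point argument in a suitable functional framework, and (ii) a conservation law argument that upgrades local to global solutions.

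First, I would pass to the Duhamel formulation
\begin{equation*}
\phi_1(t) = e^{-t\pa_x^3}\phi_{1,0} - \frac{1}{2}\int_0^t e^{-(t-s)\pa_x^3}\pa_x\bigl(\phi_1^2(s)\bigr)\, ds,
\end{equation*}
after noting that the mean $\int_\T \phi_{1,0}\, dx$ is conserved, so by subtracting it we may restrict to the mean-zero subspace of $H^s(\T)$. I would then introduce Bourgain's $X^{s,b}$ spaces adapted to the dispersive symbol $\tau - \xi^3$, namely
\begin{equation*}
\|u\|_{X^{s,b}} = \Bigl\| \langle \xi\rangle^s \langle \tau - \xi^3\rangle^b \, \widehat{u}(\tau,\xi)\Bigr\|_{L^2_\tau \ell^2_\xi},
\end{equation*}
together with their localized versions $X^{s,b}_T$ obtained by restriction to $t\in[-T,T]$. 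The linear estimates are standard: for a smooth cutoff $\psi(t)$ one has $\|\psi(t)e^{-t\pa_x^3}\phi_{1,0}\|_{X^{s,b}}\lesssim \|\phi_{1,0}\|_{H^s}$, and the Duhamel operator sends $X^{s,b-1}_T$ into $X^{s,b}_T$ with a gain of a small power $T^\theta$ for $b>1/2$.

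The heart of the argument is the bilinear estimate on the torus
\begin{equation*}
\bigl\|\pa_x(uv)\bigr\|_{X^{s,-1/2+\delta}} \lesssim \|u\|_{X^{s,1/2+\delta}}\, \|v\|_{X^{s,1/2+\delta}}, \qquad s\geq 0,
\end{equation*}
for some small $\delta>0$, restricted to mean-zero functions. I would prove it by Plancherel, reducing it to a multilinear weighted sum over frequencies $\xi=\xi_1+\xi_2$, $\tau=\tau_1+\tau_2$, and exploiting the fundamental algebraic identity
\begin{equation*}
\xi^3-\xi_1^3-\xi_2^3 = 3\,\xi\,\xi_1\,\xi_2,
\end{equation*}
so that the largest of the three weights $\langle\tau-\xi^3\rangle$, $\langle \tau_1-\xi_1^3\rangle$, $\langle \tau_2-\xi_2^3\rangle$ dominates $|\xi\xi_1\xi_2|$ (this is where the nonzero-mean hypothesis matters, since $\xi,\xi_1,\xi_2\in\Z\setminus\{0\}$). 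A Cauchy--Schwarz dyadic decomposition in this variable, together with standard counting estimates for the resulting divisor sums, then closes the estimate at $s=0$; for $s>0$ one uses $\langle\xi\rangle^s\leq \langle\xi_1\rangle^s+\langle\xi_2\rangle^s$ to reduce to the $s=0$ case.

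Combining the linear and bilinear bounds, a standard contraction argument in a ball of $X^{s,1/2+\delta}_T$ yields a unique local solution on $[-T,T]$ with $T\sim \|\phi_{1,0}\|_{H^s}^{-\alpha}$ for some $\alpha>0$. Finally, to upgrade to global well-posedness, I would invoke the conservation of the $L^2$ norm $\|\phi_1(t)\|_{L^2}=\|\phi_{1,0}\|_{L^2}$ for the KdV flow: since the local lifespan for $s=0$ depends only on the $L^2$ norm, iterating the local theory immediately gives global existence in $L^2$. For $s>0$, persistence of regularity along the local flow combined with the same iteration argument (the local time being controlled by the conserved $L^2$ norm) extends the solution globally in $H^s$.

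The main obstacle, as in Bourgain's original work, is the bilinear estimate: the counting argument for periodic frequencies requires a careful case analysis in the size of the largest modulation and in whether $\xi_1,\xi_2$ are comparable, and one must rule out degenerate resonant configurations, which is precisely what fails without the mean-zero assumption. Once this estimate is established, the rest of the argument is a routine fixed-point plus conservation-law iteration.
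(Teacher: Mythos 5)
The paper does not actually prove this statement: Theorem \ref{KdVcauchy} is quoted as a black box from Bourgain's 1993 paper and is only used to supply the profile $\phi_1$ in Theorem \ref{KdV}. Your sketch is therefore an outline of Bourgain's proof rather than of anything in the paper, and it identifies the correct skeleton: reduction to mean-zero data, the $X^{s,b}$ framework adapted to $\tau-\xi^3$, the resonance identity $\xi^3-\xi_1^3-\xi_2^3=3\xi\xi_1\xi_2$ driving the bilinear estimate, a contraction argument, and globalization via $L^2$ conservation plus persistence of regularity. Two points would not survive as written, both specific to the torus. First, the reduction to mean zero is not achieved by merely subtracting the mean: since the mean feeds back into the transport term, one needs the Galilean-type change of unknown $u(t,x)=v(t,x-ct)+c$ with $c$ the conserved mean. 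Second, and more seriously, the periodic bilinear estimate fails with the exponents you chose: on $\T$ one cannot take $b=1/2+\delta$; the estimate only holds at the endpoint, $\|\pa_x(uv)\|_{X^{s,-1/2}}\lesssim\|u\|_{X^{s,1/2}}\|v\|_{X^{s,1/2}}$ for mean-zero $u,v$ and $s\geq 0$. Working at $b=1/2$ costs you the embedding $X^{s,b}\hookrightarrow C_tH^s$ and the clean $T^\theta$ gain in the Duhamel term, so one must supplement the norm with the companion piece $\|\langle\xi\rangle^s\widehat{u}\|_{\ell^2_\xi L^1_\tau}$ and close the fixed point in that enlarged space, recovering the small-time gain by splitting the modulation regions. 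With those standard repairs your argument is exactly Bourgain's.
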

We will use this result only for large values of $s$.

\subsection{Formal derivation}

It is quite enlightening to perform a formal analysis in this simple one-dimensional case, to understand how the KdV equation arises.
Formally, one directly considers monokinetic data:
\begin{equation}
f_\eps (t,x,v) = \rho_\eps(t,x) \delta_{v = u_\eps(t,x)}.
\end{equation}
Then, as already explained, we obtain the following pressureless Euler-Poisson equation in a long wave scaling:
\begin{equation}
\left\{
    \begin{aligned}
&    \partial_t \rho_\eps  - \frac{1}{\eps} \pa_x \rho_\eps +  \pa_x (\rho_\eps u_\eps) =0,  \\
&    \partial_t u_\eps  - \frac{1}{\eps} \pa_x u_\eps +  u_\eps \pa_x u_\eps = \frac{1}{\eps} E_\eps , \\
&  E = - \pa_x \phi_\eps, \\
&  -\eps^2 \pa^2_{xx} \phi_\eps +  \eps \phi_\eps = \rho_\eps -1,\\
    \end{aligned}
  \right.
\end{equation}
We look for an approximate solution satisfying the Ansatz:
\begin{equation}
\left\{
\begin{aligned}
&\rho_\eps = 1  + \eps \rho_1 + \mathcal{O}(\eps^2), \\
&\phi_\eps = \phi_1 + \eps \phi_2 + \mathcal{O}(\eps^2), \\
&u_\eps = u_1 + \eps u_2 + \mathcal{O}(\eps^2).
\end{aligned}
  \right.
\end{equation}
Plugging this Ansatz in the equations, and matching the different powers of $\eps$, we obtain a cascade of equations.
\begin{itemize}
\item Conservation of charge equation:
\begin{align}
\label {a1}& \mathcal{O}(1): \quad   - \pa_{x} \rho_1 + \pa_{x} u_1 =0. \\
\label{a5} & \mathcal{O}(\eps): \quad \pa_{t} \rho_1 - \pa_x \rho_2 + \pa_x (\rho_1 u_1) + \pa_x u_2 =0.
\end{align}

\item Momentum equation:
\begin{align}
\label {a2}& \mathcal{O}(\eps^{-1}): \quad -\pa_x u_1  + \pa_x \phi_1 = 0,\\
\label{a3} & \mathcal{O}(1): \quad  \pa_t u_1 - \pa_{x} u_2 + u_1 \pa_x u_1 + \pa_x \phi_2=0.
\end{align}

\item Poisson equation
\begin{align}
 &\mathcal{O}(\eps): \quad \phi_1 = \rho_1, \\
\label{a4} & \mathcal{O}(\eps^2): \quad  -\pa^2_{xx} \phi_1 + \phi_2 = \rho_2.
\end{align}

\end{itemize}

We clearly get from \eqref{a1} and \eqref{a2}:
\begin{equation}
\phi_1 = \rho_1 = u_1
\end{equation}
and in the other hand from \eqref{a3} and \eqref{a4}:
\begin{equation}
\label{a6}
\begin{aligned}
\pa_x (\phi_2 -u_2) &= \pa_x (\phi_2 -\rho_2) + \pa_x (\rho_2 -u_2) \\
&= \pa^2_{xx} \phi_1 + \pa_t u_1 + 2 u_1 \pa_x u_1.
\end{aligned}
\end{equation}

Hence, from \eqref{a5} and \eqref{a6}, we conclude that $\phi_1$ satisfies the KdV equation:
\begin{equation}
2 \partial_t \phi_1 +3 \phi_1 \pa_x \phi_1 + \pa^3_{xxx} \phi_1 = 0.
\end{equation}

\subsection{Rigorous derivation} 

We now state the theorem which rigorously proves the convergence to KdV.

\begin{thm}
\label{KdV}
Let $(f_{\eps,0})_{\eps \in (0,1)}$ be a family of non-negative initial data satisfying the assumptions of Theorem \ref{Cauchy1D} and such that there exists $C>0$ with:
\begin{equation}
 \mathcal{E}_\eps(0) \leq C, \, \, \forall \eps \in (0,1).
\end{equation}

We denote by $(f_\eps)_{\eps \in (0,1)}$ a family of non-negative global weak solutions to \eqref{VP1D} given by Theorem \ref{Cauchy1D}.
Let $\mathcal{H}_\eps$ be the relative entropy defined by the functional:
\begin{equation}
\label{RE}
\begin{aligned}
\mathcal{H}_\eps (t) &:= \frac{1}{2} \int f_\eps | v - u_1 - \eps u_2 |^2 \, dv \, dx + \frac{1}{2} \eps \int | \pa_x \phi_\eps - \pa_x \phi_1 - \eps \pa_x \phi_2 |^2 \,dx \\
&+ \frac{1}{2} \int  (\phi_\eps - \phi_1 - \eps \phi_2)^2 \, dx,
\end{aligned}
\end{equation}
where $(u_1,\phi_1,u_2, \phi_2) \in [C([0,+\infty[,H^{s+2}(\T))]^4$ (with $s>3/2$) satisfy the following system:
\begin{equation}
\label{iden} 
\left\{
    \begin{aligned}
& 2 \partial_t \phi_1 +3 \phi_1 \pa_x \phi_1 + \pa^3_{xxx} \phi_1 = 0,  \\
&   u_1 = \phi_1, \\
&  \pa_x (u_2 - \phi_2) = \pa_t \phi_1 + \phi_1 \pa_x \phi_1. 
    \end{aligned}
  \right.
\end{equation}

Then there exist $C_1,C_2>0$, such that for all $\eps\in (0,1)$, 
\begin{equation}
\forall t \geq 0,  \quad \mathcal{H}_\eps (t) \leq \Hc_\eps(0)+  \int_0^t  (C_1 \mathcal{H}_\eps (s)  + C_2 \sqrt{\eps}) \, ds.
\end{equation}

Assuming in addition that there exists $C_3$ such that for any $\eps \in (0,1)$:
\begin{equation}
\mathcal{H}_\eps (0) \leq C_3  \sqrt{\eps},
\end{equation}
then we obtain for any $\eps \in (0,1)$:
\begin{equation}
\label{gron}
\forall t \geq 0, \quad \mathcal{H}_\eps (t) \leq C_3   e^{C_1 t}  \sqrt{\eps} + C_2 \frac{e^{C_1 t}-1}{C_1} \sqrt{\eps}.
\end{equation}

\end{thm}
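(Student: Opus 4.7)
The strategy is the classical relative entropy method: establish an integrated inequality of the form
\begin{equation*}
\mathcal{H}_\eps(t) - \mathcal{H}_\eps(0) \leq \int_0^t \bigl( C_1 \mathcal{H}_\eps(s) + C_2 \sqrt{\eps}\bigr)\,ds,
\end{equation*}
after which \eqref{gron} follows by Gronwall's lemma applied to $t \mapsto \mathcal{H}_\eps(t) + C_2 \sqrt{\eps}/C_1$. Expanding the squares in \eqref{RE} and setting $u_{\rm app} := u_1 + \eps u_2$, $\phi_{\rm app} := \phi_1 + \eps \phi_2$, I rewrite
\begin{equation*}
\mathcal{H}_\eps = \mathcal{E}_\eps - \int f_\eps v \, u_{\rm app} \, dv \, dx + \frac{1}{2}\int \rho_\eps u_{\rm app}^2 \, dx - \eps \int \pa_x \phi_\eps \, \pa_x \phi_{\rm app} \, dx - \int \phi_\eps \phi_{\rm app} \, dx + R(t),
\end{equation*}
where $R(t)$ depends only on $(u_1,\phi_1,u_2,\phi_2)$. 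By Theorem \ref{Cauchy1D}, $\mathcal{E}_\eps(t) - \mathcal{E}_\eps(0) \leq 0$, so this contribution can simply be dropped when bounding $\mathcal{H}_\eps(t) - \mathcal{H}_\eps(0)$ from above. The time evolution of $R(t)$ is computed directly using \eqref{iden} and the KdV equation, producing smooth known quantities.

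The heart of the proof is the evolution of the remaining cross terms. Those linear in $f_\eps$ are handled via the local conservation laws \eqref{LC1}-\eqref{LC2}; this inevitably introduces the kinetic ``pressure'' $\int v^2 f_\eps \, dv$, which I re-express through the pointwise identity
\begin{equation*}
\int v^2 f_\eps \, dv = \int (v - u_{\rm app})^2 f_\eps \, dv + 2 u_{\rm app} J_\eps - u_{\rm app}^2 \rho_\eps,
\end{equation*}
thereby isolating the nonnegative cold-ions defect that is already part of $\mathcal{H}_\eps$. The terms involving $\phi_\eps$ and $\pa_x \phi_\eps$ are recombined via integration by parts and the Poisson equation $-\eps^2 \pa_{xx}^2 \phi_\eps + \eps \phi_\eps = \rho_\eps - 1$, which reduces them to quantities linear in $\rho_\eps - 1$. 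At this stage, every surviving term is polynomial in the differences $v - u_{\rm app}$, $\phi_\eps - \phi_{\rm app}$, $\pa_x \phi_\eps - \pa_x \phi_{\rm app}$, $\rho_\eps - 1 - \eps \rho_1$, with smooth coefficients whose $L^\infty$ norms are bounded uniformly in time thanks to Theorem \ref{KdVcauchy}.

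The cancellations at orders $\eps^{-1}$, $\eps^0$ and $\eps^1$ are exactly those enforced by \eqref{iden} together with the KdV equation (this is the very reason the correctors $u_2$, $\phi_2$ were introduced, as the formal derivation in the preceding subsection already revealed). What remains is either quadratic in the above differences, hence absorbed into $C_1 \mathcal{H}_\eps$ by Cauchy-Schwarz, or a residue coming from orders $\eps^2$ and higher of the formal expansion, estimated by $C_2 \sqrt{\eps}$ using the regularity of $(u_1,\phi_1,u_2,\phi_2)$. The main obstacle is the lengthy and error-prone bookkeeping: many cross terms appear and only their combined structure yields the algebraic cancellations, so one must be meticulous in tracking the powers of $\eps$ that multiply each contribution. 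A secondary subtlety is that $f_\eps$ is merely a weak solution, so every identity has to be read distributionally; this causes no real trouble since both the energy inequality and the local conservation laws \eqref{LC1}-\eqref{LC2} are already guaranteed by Theorem \ref{Cauchy1D}, and the integral form of the differential inequality is thereby justified.
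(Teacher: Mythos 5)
Your proposal follows essentially the same route as the paper's proof: modulate the non-increasing energy, evolve the cross terms via the local conservation laws \eqref{LC1}--\eqref{LC2}, decompose the kinetic pressure around $u_1+\eps u_2$ to isolate the cold-ions defect, reduce the potential terms through the Poisson equation, enforce the cancellations of the singular orders via \eqref{iden}, and close by Gronwall, with the residual terms controlled at order $\sqrt{\eps}$ by the uniform energy bound on $\sqrt{\eps}\,\pa_x\phi_\eps$. The outline is correct and matches the paper's argument; the only substance you defer is the explicit bookkeeping of the $K_i$ terms, which the paper carries out and which indeed confirms that the three identities in \eqref{iden} kill exactly the $\Oc(1/\eps)$ and $\Oc(1)$ contributions.
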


In order to get functions satisfying \eqref{iden}, we can proceed as follows:
\begin{itemize}
\item The existence of $\phi_1$ is ensured by Theorem \ref{KdVcauchy}.
\item We accordingly set $\phi_1 =u_1$.
\item The functions $\phi_2$ and $u_2$ can be seen as correctors. The last equation of \eqref{iden} allows to define them. Only the value of the function $u_2 - \phi_2$ is important.
\end{itemize}

\begin{rque}
The estimate \eqref{gron} clearly implies that

\begin{equation}
\label{uno}
\frac{1}{2} \int f_\eps | v - u_1 - \eps u_2 |^2 \, dv \, dx  \leq C_3   e^{C_1 t}  \sqrt{\eps} + C_2 \frac{e^{C_1 t}-1}{C_1} \sqrt{\eps},
\end{equation}
and
\begin{equation}
\label{duo}
\frac{1}{2} \int  (\phi_\eps - \phi_1 - \eps \phi_2)^2 \, dx  \leq  C_3   e^{C_1 t}  \sqrt{\eps} + C_2 \frac{e^{C_1 t}-1}{C_1} \sqrt{\eps}.
\end{equation}

From \eqref{uno}, denoting by $f$ a weak limit of $f_\eps$, we deduce that necessarily
\begin{equation}
 \int f | v - u_1 |^2 \, dv \, dx =0,
\end{equation}
which means that the limit temperature is zero (cold ions limit).

\end{rque}

\begin{rque}
The estimate \eqref{gron} is valid for all times, but it is useful for times of order ${o}(|\log \eps|)$ (in other words, logarithmically growing times). This is slightly better than the times of validity obtained in the long wave limit of Euler-Poisson in \cite{LLS}, which are (translated in our framework) of order $O(1)$.
\end{rque}

From this theorem, we can deduce the following corollary:
\begin{coro}
\label{coro1}
Making the same assumptions as in the previous theorem, we obtain the weak convergences:
\begin{equation}
\left\{
\begin{aligned}
&\rho_\eps \rightharpoonup_{\eps\rightarrow 0} 1 \text{  in  } L^\infty_t \mathcal{M}^1 \text{  weak-*}, \\
&J_\eps \rightharpoonup_{\eps\rightarrow 0} \phi_1 \text{  in  } L^\infty_t \mathcal{M}^1 \text{  weak-*}, \\
&\phi_\eps \rightharpoonup_{\eps\rightarrow 0} \phi_1 \text{  in  } L^\infty_t \mathcal{M}^1 \text{  weak-*}.
\end{aligned}
\right.
\end{equation}
\end{coro}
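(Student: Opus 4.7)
The plan is to read off each of the three weak-$*$ convergences from the relative-entropy bound \eqref{gron} of Theorem \ref{KdV}, using additionally the Poisson equation for $\rho_\eps$ and a Cauchy--Schwarz argument for $J_\eps$. Fix $T > 0$; on $[0,T]$ the estimates \eqref{uno}--\eqref{duo} give, uniformly in $t$,
\begin{equation*}
\int_\T |\phi_\eps - \phi_1 - \eps\phi_2|^2 \, dx = O(\sqrt{\eps}), \qquad \int_{\T \times \R} f_\eps |v - u_1 - \eps u_2|^2 \, dv \, dx = O(\sqrt{\eps}).
\end{equation*}
Since $\phi_2 \in C([0,+\infty[; H^{s+2}(\T))$ is in particular bounded in $L^2$, the first display gives $\phi_\eps \to \phi_1$ strongly in $L^\infty([0,T]; L^2(\T))$ at rate $\eps^{1/4}$; Cauchy--Schwarz against any $F \in L^1([0,T]; C(\T))$ (together with $C(\T) \subset L^2(\T)$, as $\T$ is compact) then upgrades this to weak-$*$ convergence in $L^\infty([0,T]; \mathcal{M}^1(\T))$, which is the claim for $\phi_\eps$.

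For $\rho_\eps$, I rewrite the Poisson equation of \eqref{VP1D} as $\rho_\eps - 1 = -\eps^2 \pa^2_{xx}\phi_\eps + \eps \phi_\eps$, test against a smooth tensor $\chi(t)\psi(x)$, and integrate by parts twice in $x$. The energy inequality of Theorem \ref{Cauchy1D} provides the uniform bound $\|\phi_\eps\|_{L^\infty_t L^2_x} \leq C$, so both resulting terms are $O(\eps)$; density of smooth $\psi$ in $C(\T)$ concludes that $\rho_\eps \rightharpoonup 1$. For the current, I decompose
\begin{equation*}
J_\eps = \int_\R f_\eps(v - u_1 - \eps u_2)\, dv + (u_1 + \eps u_2)\rho_\eps.
\end{equation*}
Cauchy--Schwarz in $(v,x)$, mass conservation $\int f_\eps\,dv\,dx = 1$, and \eqref{uno} bound the first piece by $O(\eps^{1/4})$ in $L^\infty_t L^1_x$, so it vanishes in $L^\infty_t \mathcal{M}^1$ weak-$*$; the second piece tends to $u_1 = \phi_1$ in the same topology thanks to the convergence $\rho_\eps \rightharpoonup 1$ just established, the identity $u_1 = \phi_1$ from \eqref{iden}, and the smoothness of $u_1, u_2$.

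The only real obstacle is the current $J_\eps$: as a velocity first moment, it is controlled by the relative entropy only through the quadratic kinetic quantity $\int f_\eps |v - u_1 - \eps u_2|^2\, dv\, dx$, which forces the Cauchy--Schwarz trick above and gives a less sharp rate $\eps^{1/4}$. This is also why the natural target topology is $L^\infty_t \mathcal{M}^1$ weak-$*$ rather than something stronger, even though the argument actually delivers strong $L^\infty_t L^2_x$ convergence for $\phi_\eps$.
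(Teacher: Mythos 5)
Your argument is correct, and it reaches the same conclusions as the paper, but the identification of the limit of $J_\eps$ is done by a genuinely different route. The paper first extracts weak-$*$ limits $\rho$, $J$ of $\rho_\eps$, $J_\eps$ (using the uniform $L^1$ bounds on $\rho_\eps$ and, via the kinetic energy, on $J_\eps$), identifies $\rho=1$ from the Poisson equation exactly as you do, and then identifies $J$ by combining the pointwise Cauchy--Schwarz inequality
\begin{equation*}
\frac{\vert J_{\eps} - \rho_\eps (u_1 + \eps u_2)\vert^2}{\rho_\eps}\leq  \int f_\eps\vert  v - u_1 - \eps u_2\vert^2 \, dv
\end{equation*}
with the convexity and lower semicontinuity of $(\rho,J) \mapsto \int \vert J - \rho u\vert^2/\rho \, dx$ with respect to weak convergence of measures (Brenier's lemma), concluding $J=u_1=\phi_1$ by uniqueness of the limit. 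You instead bypass the lower-semicontinuity lemma entirely: the decomposition $J_\eps = \int f_\eps(v-u_1-\eps u_2)\,dv + (u_1+\eps u_2)\rho_\eps$, a double Cauchy--Schwarz, and the total mass $\int f_\eps\,dv\,dx=1$ give the first piece as $O(\eps^{1/4})$ in $L^\infty_t L^1_x$, while the second converges against continuous test functions once $\rho_\eps\rightharpoonup 1$ is known. Your version is more elementary, avoids subsequence extraction, and yields a quantitative rate for the fluctuation part; the paper's version is the standard modulated-energy identification argument and remains applicable in situations where one cannot split off a product of a smooth field with $\rho_\eps$ so cleanly. Your treatment of $\phi_\eps$ (strong $L^\infty_t L^2_x$ convergence at rate $\eps^{1/4}$ from \eqref{duo}, then weakening to $\mathcal{M}^1$ weak-$*$) matches the remark following the paper's proof. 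One small point worth making explicit: the density argument you invoke for $\rho_\eps$ requires the uniform bound $\|\rho_\eps\|_{L^1}\leq C$ coming from conservation of mass, which you use later for $J_\eps$ but should also cite at that stage.
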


\begin{proof}[Proof of Corollary \ref{coro1}]

By conservation of the $L^1$ norm, we deduce that
$$
\| \rho_\eps \|_{L^1} \leq C.
$$
Using this bound and the one coming from the energy inequality:
$$
\int f_\eps |v|^2 \, dv \, dx \leq C,
$$
and by non-negativity of $f_\eps$, it is standard to deduce a $L^1$ control on $J_\eps$: 
$$
\| \rho_\eps \|_{L^1} \leq C.
$$

Therefore, there exist two non-negative measures $\rho, J$ such that up to some extraction, we have:
\begin{equation}
\left\{
\begin{aligned}
&\rho_\eps \rightharpoonup \rho, \\
&J_\eps \rightharpoonup J,
\end{aligned}
\right.
\end{equation}
in the vague sense of measures. We have to show that $\rho=1$ and $J= \phi_1$.

We pass to the limit $\eps \rightarrow 0$ (in the sense of distributions) in the Poisson equation:
$$
-\eps^2 \pa^2_{xx} \phi_\eps +  \eps \phi_\eps = \rho_\eps -1,
$$
using the uniform $L^2$ bound on $\sqrt \eps \pa_x \phi_\eps$ and $\phi_\eps$ (coming from the energy inequality), we deduce that we necessarily have $\rho=1$, which proves the first claim.

In the other hand, using the Cauchy-Schwarz inequality, we have
\begin{equation}
\begin{aligned}
\frac{\vert J_{\epsilon} - \rho_\epsilon (u_1 + \eps u_2)\vert^2}{\rho_\epsilon}= \frac{\left(\int f_\epsilon( v - u_1 - \eps u_2)dv\right)^2}{\int f_\epsilon dv}\leq  \int f_\epsilon\vert  v - u_1 - \eps u_2\vert^2 \, dv,
\end{aligned}
\end{equation}

The functional $(\rho,J) \rightarrow \int \frac{\vert J - \rho (u_1 + \eps u_2)\vert^2}{\rho} \, dx$ is convex and lower semi-continuous with respect to the weak convergence of measures (see \cite{Br00}). As a consequence, the weak convergences in the vague sense of measures $\rho_\epsilon \rightharpoonup 1$ and $J_\epsilon \rightharpoonup J$ lead to:

\begin{equation}
\int  {\vert J -  u_1 \vert^2} dx \leq \liminf_{\epsilon\rightarrow 0} \int \frac{\vert J_\epsilon - \rho_\epsilon u\vert^2}{\rho_\epsilon}dx.
\end{equation}

By \eqref{gron}, we deduce that $J= u_1 (= \phi_1)$.

To conclude, the uniqueness of the limit allows us to say that the weak convergences actually hold without any extraction.

\end{proof}

\begin{rque}
We can actually state strong convergence results. Indeed, in view of the preceeding proof, it is clear that \eqref{uno} implies the ``strong'' convergence:
\begin{equation}
\left\| \frac{ J_{\epsilon} - \rho_\epsilon (u_1 + \eps u_2)}{\sqrt \rho_\epsilon} \right\|_{L^2}^2 \leq  C_3   e^{C_1 t}  \sqrt{\eps} + C_2 \frac{e^{C_1 t}-1}{C_1} \sqrt{\eps}.
\end{equation}

In the other hand, the control \eqref{duo} means that $\phi_\eps$ converges strongly in $L^2$ to $\phi_1$, as $\eps$ goes to $0$.
\end{rque}

\subsection{Proof of Theorem \ref{KdV}}
Relying on the fact the energy is non-increasing (and that $\Hc_\eps$ is built as a modulation of the energy), we have for all $t\geq0$ and all $\eps \in (0,1)$:
$$
\Hc_\eps(t) = \mathcal{E}_\eps(t) + (\Hc_\eps(t)- \mathcal{E}_\eps(t)) \leq \mathcal{E}_\eps(0) +  (\Hc_\eps(t)- \mathcal{E}_\eps(t)),
$$
which yields:
\begin{align*}
\Hc_\eps (t) &\leq \Hc_\eps(0) + \int_0^t \int \pa_t \left[ f_\eps \left( \frac{1}{2} |u_1 + \eps u_2 |^2 - v (u_1 + \eps u_2) \right) \right] \, dv \, dx \\
&+ \eps \int \pa_t \left[ \frac{1}{2} |\pa_x \phi_1 + \eps \pa_x \phi_2 |^2 - \pa_x \phi_\eps (\pa_x \phi_1 + \eps \pa_x \phi_2)\right ] \, dx \\
&+ \int \pa_t \left [\frac{1}{2}| \phi_1 + \eps \phi_2 |^2 - \phi_\eps ( \phi_1 + \eps  \phi_2) \right ] \, dx \, ds \\
&:=\Hc_\eps(0) + \int_0^t  (I_1 + I_2 + I_3) \, ds.
\end{align*}

We are now going to study $I_1$, $I_2$ and $I_3$. 
The computations can be justified using only the local conservation laws \eqref{LC1} and \eqref{LC2}.

\medskip
{\bf Study of $I_1$}.

Using the fact that $f_\eps$ satisfies the Vlasov-Poisson equation, we obtain the identity:
\begin{align*}
&\int \pa_t  f_\eps \left( \frac{1}{2} |u_1 + \eps u_2 |^2 - v (u_1 + \eps u_2) \right) \, dv \, dx \\
&= \int f_\eps (u_1 + \eps u_2 - v ) \Big[ - \frac{1}{\eps} \pa_x u_1   + v \pa_x u_1 - \pa_x u_2  
+ \eps  v \pa_x u_2 \Big]  \, dv \, dx \\
&- \int \frac{1}{\eps} \rho_\eps E_\eps u_1  \, dx  - \int  \rho_\eps E_\eps  u_2 \, dx.
\end{align*}

In order to obtain an hydrodynamic equation inside $[...]$, we write:
\begin{multline*}
 \int f_\eps (u_1 + \eps u_2 - v )  (v \pa_x u_1) =   \int f_\eps (u_1 + \eps u_2 - v )  (u_1 \pa_x u_1)  \\
- \int f_\eps |u_1 + \eps u_2 - v|^2  \pa_x u_1 + \int \eps f_\eps (u_1 + \eps u_2 - v ) u_2 \pa_x u_1.
\end{multline*}

After deriving with respect to time the term $ \frac{1}{2} |u_1 + \eps u_2 |^2 - v (u_1 + \eps u_2)$, we get the following contribution in $I_1$:
$$
 \int f_\eps (u_1 + \eps u_2 - v ) \Big[  \pa_t u_1  
+ \eps \pa_t u_2 \Big]  \, dv \, dx.
$$
We now focus on the terms of order $\mathcal{O}(1/\eps)$, for which we can write
\begin{multline*}
 - \int f_\eps (u_1 + \eps u_2 - v ) \frac{1}{\eps} \pa_x u_1   \, dv \, dx \\
 =  \int f_\eps (u_1 + \eps u_2 - v ) \frac{1}{\eps} (-\pa_x u_1 + \pa_x \phi_1)   \, dv \, dx  \\
 - \int \frac{1}{\eps}  \rho_\eps  u_1 \pa_x \phi_1 \, dx   - \int   \rho_\eps  u_2 \pa_x \phi_1  \, dx +  \int \frac{1}{\eps}  J_\eps   \pa_x \phi_1  \, dx,
\end{multline*}
and
\begin{multline*}
- \int \frac{1}{\eps} \rho_\eps E_\eps u_1  \, dx  - \int \frac{1}{\eps}  \rho_\eps  u_1 \pa_x \phi_1 \, dx -  \int \rho_\eps u_1 \pa_x \phi_2 \, dx \\
= - \frac{1}{\eps} \int \rho_\eps u_1 (\pa_x \phi_1 + \eps \pa_x \phi_2 - \pa_x \phi) \, dx.
\end{multline*}

In the other hand, we observe that we can write:
\begin{multline*}
 \int f_\eps (u_1 + \eps u_2 - v ) \Big[ \pa_t u_1 + u_1 \pa_x u_1 - \pa_x u_2  \Big]  \, dv \, dx \\
 =  \int f_\eps (u_1 + \eps u_2 - v ) \Big[ \pa_t u_1 + u_1 \pa_x u_1 - \pa_x u_2  + \pa_x \phi_2 \Big]  \, dv \, dx \\
 + \int J_\eps \pa_x \phi_2 \, dx  -  \int \rho_\eps u_1 \pa_x \phi_2 \, dx  - \eps \int \rho_\eps u_2 \pa_x \phi_2 \, dx.
\end{multline*}

Using the $L^1$ uniform bounds for $\rho_\eps$ and $J_\eps$, as well as the various Lipschitz bounds on $(u_1,u_2)$, it is clear that 
$$\left|\eps \int \rho_\eps u_2 \pa_x \phi_2 \, dx\right| + \left| \eps\int f_\eps (u_1 + \eps u_2 - v ) \pa_t u_2
 \, dv \, dx\right|\leq C \eps.$$

In the end, we have:
\begin{equation}
\begin{aligned}
I_1 &= \frac{1}{\eps} \int f_\eps (u_1 + \eps u_2 - v ) \Big[ - \pa_x u_1  + \pa_x \phi_1  \Big]  \, dv \, dx \\
&+ \int f_\eps (u_1 + \eps u_2 - v ) \Big[  \pa_t u_1 + v \pa_x u_1 - \pa_x u_2  + \pa_x \phi_2 \Big]\, dv \, dx \\
&+ \int J_\eps \left(\frac{1}{\eps}\pa_x \phi_1 + \pa_x \phi_2\right) \, dx \\
&- \frac{1}{\eps} \int \rho_\eps u_1 (\pa_x \phi_1 + \eps \pa_x \phi_2 - \pa_x \phi) \, dx \\
&- \frac{1}{\eps} \int \rho_\eps u_2 \pa_x( \phi_1 + \eps \phi_2)  \,dx  - \int \rho_\eps E_\eps u_2 \, dx \\
&- \int f_\eps |u_1 + \eps u_2 - v|^2  \pa_x u_1  +  \mathcal{O}(\eps),
\end{aligned}
\end{equation}
where $ \mathcal{O}(\eps)$ is a notation for all the terms which can be bounded by $C_\eps$, with $C>0$ independent of $\eps$.

\medskip
{\bf Study of $I_2$}.

We have:
\begin{multline*}
I_2 = \eps \int \pa_t (\pa_x \phi_1 + \eps \pa_x  \phi_2) (-\pa_x  \phi_\eps + \pa_x  \phi_1 + \eps \pa_x \phi_2) \, dx \\
- \eps \int \pa_t \pa_x  \phi_\eps (\pa_x  \phi_1 + \eps \pa_x  \phi_2) \, dx.
\end{multline*}

We get the easy bound (using $|ab| \leq \frac{1}{2}(a^2 + b^2)$):
\begin{align*}
&\eps \left| \int (\pa_x \phi_1 + \eps \pa_x  \phi_2)(-\pa_x  \phi_\eps + \pa_x  \phi_1 + \eps \pa_x \phi_2) \, dx\right|\\
& \leq
C \left[\eps  \int (\pa_t  (\pa_x \phi_1 + \eps \pa_x  \phi_2))^2 \, dx + \eps \int (-\pa_x  \phi_\eps + \pa_x  \phi_1 + \eps \pa_x \phi_2)^2 \, dx \right] \\
&\leq \mathcal{O}(\eps) + C \Hc_\eps(t).
\end{align*}

\medskip
{\bf Study of $I_3$}.

We get:
\begin{multline*}
I_3 = \int \pa_t (\phi_1 + \eps \phi_2) (-\phi_\eps + \phi_1 + \eps \phi_2) \, dx \\
- \int \pa_t \phi_\eps (\phi_1 + \eps \phi_2) \, dx := I_3^1 + I_3^2.
\end{multline*}

Let us start with $I_3^1$, that we can rewrite as:
\begin{multline*}
I_3^1 = \int (\phi_1 - \phi_\eps) \pa_t \phi_1 +  \eps \int \pa_t \phi_2 (-\phi_\eps + \phi_1 + \eps \phi_2) \, dx + \eps \int \pa_t \phi_1 \phi_2 \, dx. 
\end{multline*}

If we differentiate with respect to time the Poisson equation, we obtain:
\begin{align*}
-\pa_t \phi_\eps = - \eps \pa^2_{xx} \pa_t \phi_\eps - \frac{1}{\eps} \pa_t \rho_\eps.
\end{align*}
Then, using the local conservation of charge (in shifted variables) that we recall below,
\begin{align*}
-  \frac{1}{\eps} \pa_t \rho_\eps = -  \frac{1}{\eps^2}\pa_x \rho_\eps +  \frac{1}{\eps} \pa_x J_\eps,
\end{align*}
we obtain that:
\begin{multline*}
I_3^2 = - \frac{1}{\eps^2} \int \pa_x \rho_\eps (\phi_1 + \eps \phi_2) \, dx   \\
+ \frac{1}{\eps} \int \pa_x J_\eps \phi_1 \, dx + \int \pa_x J_\eps \phi_2 \, dx \\
- \eps \int \pa_t \pa_{xx}^2 \phi_\eps (\phi_1 + \eps \phi_2).
\end{multline*}

Of course, by integration by parts, we can rewrite the last three terms as follows: 
\begin{multline*}
 \frac{1}{\eps} \int \pa_x J_\eps \phi_1 \, dx + \int \pa_x J_\eps \phi_2 \, dx + \eps \int \pa_t \pa_{xx}^2 \phi_\eps (\phi_1 + \eps \phi_2) \\
  = - \frac{1}{\eps} \int  J_\eps \pa_x \phi_1 \, dx - \int  J_\eps \pa_x \phi_2 \, dx + \eps \int \pa_t \pa_{x} \phi_\eps (\pa_x \phi_1 + \eps \pa_x \phi_2),
\end{multline*} 
which get simplified using some terms coming from the computations of $I_1$ and $I_2$.

\medskip
{\bf Study of the remaining significant terms}.

We now have to study the following terms (coming from the computation of $I_1 + I_2 + I_3$):
\begin{align*}
K_1 &:= - \int \rho_\eps u_2 \pa_x (\phi_1 + \eps \phi_2) \, dx, \quad
K_2 := - \int \rho_\eps E_\eps u_2 \, dx, \\
K_3 &:= - \frac{1}{\eps} \int  \rho_\eps u_1 (\pa_x \phi_1 + \eps \pa_x \phi_2 - \pa_x \phi_\eps) \, dx, \\
K_4 &:= - \frac{1}{\eps^2} \int \pa_x \rho_\eps (\phi_1 + \eps \phi_2) \, dx, \\
K_5 &:= \int (\phi_1 - \phi_\eps) \pa_t \phi_1\, dx.  \\
\end{align*}

We start with $K_1$ and $K_2$. Using the Poisson equation, we have:
\begin{multline*}
K_1 = - \int u_2 \pa_x(\phi_1 + \eps \phi_2) \, dx - \eps \int \phi_\eps u_2 \pa_x (\phi_1 + \eps \phi_2)\, dx  \\
+ \eps^2 \int \pa^2_{xx} \phi_\eps u_2 \pa_x (\phi_1 + \eps \phi_2) \, dx \\
=  \int \pa_x u_2  \phi_1 \, dx - \eps \int \phi_\eps u_2 \pa_x \phi_1 \, dx + \eps^2 \int \pa^2_{xx} \phi_\eps u_2 \pa_x (\phi_1 + \eps \phi_2) \, dx \\
+  \mathcal{O}(\eps).
\end{multline*}

We have (after integration by parts),
\begin{align*}
\eps^2 \left|\int \pa^2_{xx} \phi_\eps u_2 \pa_x (\phi_1 + \eps \phi_2) \, dx \right| \leq C\eps.
\end{align*}

Similarly, we compute
\begin{multline*}
K_2 =  \int u_2 \pa_x \phi_\eps \, dx + \eps \int \phi_\eps u_2 \pa_x \phi_\eps \, dx - \eps^2 \int \pa^2_{xx} \phi_\eps u_2 \pa_x \phi_\eps \, dx \\
= - \int \pa_x u_2  \phi_\eps \, dx - \eps \int \frac{\phi_\eps^2}{2} \pa_x u_2 \ \, dx + \eps^2 \int \frac{(\pa_x \phi_\eps)^2}{2} \pa_x u_2 \, dx.
\end{multline*}

As a result, one gets
\begin{equation*}
K_1 + K_2 =  \int (\phi_1 - \phi_\eps) \pa_x u_2  \, dx + \mathcal{O}(\eps).
\end{equation*}

Concerning $K_3$, we have (once again using the Poisson equation):

\begin{align*}
K_3 &=  - \frac{1}{\eps} \int  u_1 (\pa_x \phi_1 + \eps \pa_x \phi_2 - \pa_x \phi_\eps) \, dx \\
&-  \int  \phi_\eps u_1 (\pa_x \phi_1 + \eps \pa_x \phi_2 - \pa_x \phi_\eps) \, dx \\
&+  \int  \eps \pa^2_{xx} \phi_\eps u_1 (\pa_x \phi_1 + \eps \pa_x \phi_2 - \pa_x \phi_\eps) \, dx \\
&:= K_3^1 + K_3^2 + K_3^3.
\end{align*}

For $K_3^1$, we have:
\begin{align*}
K_3^1 =- \frac{1}{\eps} \int  u_1 \pa_x \phi_1 \, dx - \int u_1 \pa_x \phi_2 \, dx  +  \frac{1}{\eps} \int  u_1 \pa_x \phi_\eps \, dx.
\end{align*}

Concerning $K_3^2$, we write:
\begin{align*}
K_3^2 = - \int \phi_\eps u_1 \pa_x u_1 \, dx + \int \phi_\eps \pa_x \phi_\eps \, u_1 \, dx +  \eps \int \phi_\eps u_1 \pa_x \phi_2 \, dx  
\end{align*}
and
\begin{equation*}
- \int \phi_\eps u_1 \pa_x u_1 \, dx = \int (\phi_1 - \phi_\eps) u_1 \pa_x u_1 \, dx - \int \phi_1 u_1 \pa_x u_1 \, dx.
\end{equation*}

The contribution coming from $ \int \phi_\eps \pa_x \phi_\eps \, u_1 \, dx$ could be harmful (a priori it is of order $\Oc(1/{\eps})$), but we can rely on the following identities in order to make the relative entropy appear:
\begin{align*}
& \int \phi_\eps \pa_x \phi_\eps \, u_1 \, dx = - \frac{1}{2} \int \phi^2_\eps \pa_x u_1 \, dx \\
& = - \frac{1}{2} \int (\phi_\eps- \phi_1 - \eps \phi_2)^2 \pa_x u_1 \, dx  - \int \phi_\eps  \phi_1 \pa_x u_1 \, dx  + \eps \int \phi_\eps \phi_2 \pa_x u_1 \, dx \\
& \quad \quad + \frac{1}{2} \int (\phi_1 + \eps \phi_2)^2 \pa_x u_1\, dx \\
& = - \frac{1}{2} \int (\phi_\eps- \phi_1 - \eps \phi_2)^2 \pa_x u_1 \, dx  + \int (\phi_1 - \phi_\eps)  \phi_1 \pa_x u_1 \, dx  + \eps \int \phi_\eps \phi_2 \pa_x u_1 \, dx \\
& \quad \quad - \frac{1}{2} \int \phi_1^2 \pa_x u_1\, dx + \eps \int \left(\phi_1 \phi_2 \pa_x u_1 + \frac{1}{2} \eps \phi_2^2 \pa_x u_1 \right)\, dx.
\end{align*}
We note that $\eps \int \phi_\eps \phi_2 \pa_x u_1 \, dx$ is of order $\eps$, using the $L^2$ uniform bound on $\phi_\eps$ granted by the energy.

Finally, for $K_3^3$, we have
\begin{align*}
K_3^3 =-  \int   \pa^2_{xx} \phi_\eps u_1  \pa_x \phi_\eps \, dx +  \eps \int   \pa^2_{xx} \phi_\eps u_1 (\pa_x \phi_1 + \eps \pa_x \phi_2) \, dx.
\end{align*}

As before, the most significant term can be rewritten as follows:
\begin{align*}
&-  \int   \pa^2_{xx} \phi_\eps u_1  \pa_x \phi_\eps \, dx = \eps \int \frac{(\pa_x \phi_\eps)^2}{2} \pa_x u_1 \, dx \\
&= \frac{1}{2} \int \eps (\pa_x \phi_\eps - \pa_x \phi_1 - \eps \pa_x \phi_2)^2 \pa_x u_1 + \eps \int \pa_x \phi_\eps (\pa_x \phi_1+ \eps \pa_x \phi_2) \pa_x u_1 \, dx \\
&- \frac{1}{2} \int \eps (\pa_x \phi_1 + \eps \pa_x \phi_2)^2 \pa_x u_1 \, dx.
\end{align*}

We have the bound:
$$
\eps \left| \int \pa_x \phi_\eps (\pa_x \phi_1+ \eps \pa_x \phi_2) \pa_x u_1 \, dx\right|  \leq C \sqrt{\eps}.
$$

We treat $K_4$ exactly as $K_3$.
\begin{align*}
K_4 &=  \frac{1}{\eps^2} \int \rho_\eps (\pa_x \phi_1 + \eps \pa_x \phi_2) \, dx \\
&=  \frac{1}{\eps^2} \int   (\pa_x \phi_1 + \eps \pa_x \phi_2 ) \, dx +  \frac{1}{\eps} \int  \phi_\eps  (\pa_x \phi_1 + \eps \pa_x \phi_2) \, dx \\
&-  \int   \pa^2_{xx} \phi_\eps  (\pa_x \phi_1 + \eps \pa_x \phi_2) \, dx \\
&:= K_4^1 + K_4^2 + K_4^3.
\end{align*}

Clearly, we have $K_4^1=0$ and
\begin{align*}
K_4^2 =  \frac{1}{\eps} \int \phi_\eps \pa_x \phi_1 \, dx +  \int \phi_\eps \pa_x \phi_2 \, dx, 
\end{align*}
and since $\int \phi_1  \pa^3_{xxx} \phi_1 \, dx=0$, we have:
$$K_4^3 = \int (\phi_1 - \phi_\eps) \pa^3_{xxx} \phi_1 \, dx  - \eps \int \pa_{xx}^2 \phi_\eps \pa_x \phi_2 \, dx.$$
By integration by parts, it is clear that the second term of $K_4^3$ is of order $\sqrt{\eps}$.

\medskip
{\bf Conclusion}.

Gathering all pieces together, we obtain that:
\begin{align*}
&\Hc_\eps(t) \leq \Hc_\eps(t) + \int_0^t \Big[  \int f_\eps (u_1 + \eps u_2 - v ) \frac{1}{\eps} (-\pa_x u_1 + \pa_x \phi_1)   \, dv \, dx  \\
&+ \int f_\eps (u_1 + \eps u_2 - v ) \Big[ \pa_t u_1 + u_1 \pa_x u_1 - \pa_x u_2  + \pa_x \phi_2 \Big]  \, dv \, dx \\
&+ \frac{1}{\eps} \int (u_1- \phi_1) \pa_x \phi_\eps \, dx \\
&+ \int (\phi_1 - \phi_\eps) \left( \pa_t \phi_1 +  \phi_1 \pa_x u_1 + u_1 \pa_x u_1 + \pa^3_{xxx} \phi_1 + \pa_x u_2 - \pa_x \phi_2) \right) \, dx \\
&- \int (u_1-\phi_1) \pa_x \phi_2  - \int \phi_1 u_1 \pa_x u_1 \, dx - \frac{1}{2} \int \phi_1^2 \pa_x u_1 \\
&-   \int f_\eps | v - u_1 - \eps u_2 |^2 \pa_x u_1 \, dv \, dx - \frac{1}{2} \eps \int | \pa_x \phi_\eps - \pa_x \phi_1 - \eps \pa_x \phi_2 |^2  \pa_x u_1 \,dx \\
&- \frac{1}{2} \int  (\phi_\eps - \phi_1 - \eps \phi_2)^2 \pa_x u_1 \, dx \\
&+ \mathcal{O}(\sqrt \epsilon)\Big] \, ds.
\end{align*}

We impose the following cancellations (to kill all singular terms):
\begin{equation}
\left\{
    \begin{aligned}
& u_1 -  \phi_1 = 0,  \\
&   \pa_t u_1 + u_1 \pa_x u_1 - \pa_x u_2  + \pa_x \phi_2=0 , \\
&  \pa_t \phi_1 +  \phi_1 \pa_x u_1 + u_1 \pa_x u_1 + \pa^3_{xxx} \phi_1 + \pa_x u_2 - \pa_x \phi_2=0.
    \end{aligned}
  \right.
\end{equation}
These are consequences of the identity \eqref{iden}. Using the Lipschitz bound on $u_1$ we end up with:
\begin{equation}
 \Hc_\eps(t) \leq    \Hc_\eps(0)  + \int_0^t (C_1 \Hc_\eps (t) + C_2 \sqrt{\eps}) \, ds,
\end{equation}
which then yields the claimed Gronwall type bound \eqref{gron}.
The proof is consequently complete.




\section{From the Vlasov-Poisson equation to the Zakharov-Kuznetsov equation} 
\label{sec3}

We perform the analysis in $3D$, but this can also be done in $2D$, in an almost similar way.
Let $(e_1,e_2,e_3)$ be an orthonormal basis of $\R^3$; for simplicity we fix $b= e_1$.

We study the behaviour, as $\eps \rightarrow 0$, of the solutions to the following equation (for $(x,v) \in \T^3 \times \R^3$):
\begin{equation}
\label{VP3D}
\left\{
    \begin{aligned}
&  \eps \,  \partial_t f_\eps  - \pa_{x_1} f_\eps + \eps \, v  \cdot \na_{x} f_\eps + \left(E_\eps + \frac{v\wedge e_1}{\sqrt{\eps}} \right)\cdot  \na_v f_\eps =0,  \\
&  E_\eps = -\nabla_x \phi_\eps, \\
&  -\eps^2 \Delta_x \phi_\eps +  e^{\eps \phi_\eps} = \int_{\R^3}  f_\eps \, dv,\\
&    f_{\eps,\vert t=0} =f_{\eps,0}.\\
    \end{aligned}
  \right.
\end{equation}

\subsection{Preliminaries}

This system possesses an energy, which is conserved, at least formally:

\begin{equation}
\begin{aligned}
\mathcal{E}_\eps (t) &:= \frac{1}{2} \int f_\eps | v |^2   \, dv \, dx \\ 
&+ \frac{1}{2} \eps \int | \na_x \phi_\eps|^2 \,dx + \frac{1}{\eps^2} \int  ( e^{\eps \phi_\eps} \left(\eps \phi_\eps -1 \right) + 1 )  \, dx, 
\end{aligned}
\end{equation}

Note that the third term of this energy has a $L \log L$ structure:
$$
\frac{1}{\eps^2} \int  ( e^{\eps \phi_\eps} \left(\eps \phi_\eps -1 \right) + 1 )  \, dx
= \frac{1}{\eps^2} \int  ( e^{\eps \phi_\eps} \log (e^{\eps \phi_\eps}/ e^{0}) - e^{\eps \phi_\eps} + e^{0})   \, dx.
$$

We have the following global existence theorem, which can be adapted from the work of Bouchut \cite{Bou}:
\begin{thm}
\label{Cauchy3D}
Let $\eps>0$. Let $f_{\eps,0} \in L^1\cap L^\infty(\T^3 \times \R^3)$ be a non-negative function such that:
\begin{equation}
\begin{aligned}
\mathcal{E}_\eps (0) &:= \frac{1}{2} \int f_{\eps,0} | v |^2   \, dv \, dx \\ 
&+ \frac{1}{2} \eps \int | \na_x \phi_{\eps,0}|^2 \,dx + \frac{1}{\eps^2} \int   (e^{\eps \phi_{\eps,0}} \left(\eps \phi_{\eps,0} -1 \right) +1)  \, dx < + \infty,
\end{aligned}
\end{equation}
where the initial electric potential $\phi_{\eps,0}$ is given by the elliptic equation:
$$
-\eps^2 \Delta_x \phi_{\eps,0} +  e^{\eps \phi_{\eps,0}} = \int_{\R^3}  f_{\eps,0} \, dv.
$$
We also assume that:
$$
\int f_{\eps,0} \, dv \, dx =1.
$$
Then there exists a non-negative global weak solution $f_\epsilon \in  L^\infty_t(L^1\cap L^\infty(\T^3 \times \R^3))$ to \eqref{VP3D}, such that:
\begin{equation}
\forall t \geq t', \quad \mathcal{E}_\eps(t) \leq \mathcal{E}_\eps(t'),
\end{equation}
and such that the following conservation laws for $(\rho_\eps:=\int f_\eps \, dv, J_\eps := \int f_\eps v \, dv)$ are satisfied: 
\begin{equation}
\label{LC1'}
\partial_t \rho_\eps - \frac{1}{\eps} \pa_{x_1} \rho_\eps + \nabla_x \cdot J_\eps=0,
\end{equation}
\begin{multline}
\label{LC2'}
\partial_t J_\eps - \frac{1}{\eps} \pa_{x_1} J_\eps +  \nabla_x : \left(\int v \otimes v f_\eps \, dv \right) = \\
  - \frac{1}{\eps} \nabla_x (e^{\eps \phi_\eps}) + \eps \na_x \cdot (\nabla_x V_\eps \otimes \nabla_x V_\eps) - \frac{\eps}{2}  \nabla_x \vert \nabla_x V_\eps \vert^2.
\end{multline}

\end{thm}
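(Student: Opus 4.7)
My plan is to follow the standard Arsenev--Bouchut construction of global weak solutions for Vlasov--Poisson systems, adapted to our setting. Throughout, $\eps > 0$ is fixed, so no uniformity in $\eps$ is required. The two features that differ from the classical situation are the nonlinear Poisson equation $-\eps^2\Delta\phi_\eps + e^{\eps\phi_\eps} = \rho_\eps$ and the singularly scaled magnetic term $v\wedge e_1/\sqrt\eps$. First I would regularize the initial datum $f_{\eps,0}$ by a sequence of non-negative $f_{\eps,0}^n \in C_c^\infty(\T^3\times\R^3)$ with uniformly bounded $L^1$, $L^\infty$ and energy norms, and mollify the self-consistent field by convolving $E_\eps^n$ with a smooth cutoff at scale $1/n$. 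For non-negative $\rho \in L^1\cap L^{5/3}(\T^3)$, the nonlinear Poisson equation is uniquely solvable as the Euler--Lagrange equation of the strictly convex, coercive functional $\phi \mapsto \tfrac{\eps^2}{2}\int|\nabla\phi|^2 + \eps^{-2}\int e^{\eps\phi} - \int\rho\phi$, and depends continuously on $\rho$. Since the mollified field is globally Lipschitz and the force is divergence-free in $v$ (note $\nabla_v\cdot(E + v\wedge e_1/\sqrt\eps) = 0$), the method of characteristics produces a smooth non-negative approximate solution $f_\eps^n$, with conserved $L^p$ norms for all $p \in [1,\infty]$ and conserved total mass.

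The next step is to derive uniform-in-$n$ a priori estimates. Multiplying the Vlasov equation by $|v|^2/2$ and integrating in $(x,v)$, the long-wave transport $-\eps^{-1}\pa_{x_1}f_\eps^n$ contributes nothing (pure $x$-divergence) and the magnetic force contributes nothing (since $v\cdot(v\wedge e_1) = 0$), so only the electric force remains. Using $\pa_t(e^{\eps\phi}) = \eps e^{\eps\phi}\pa_t\phi$ together with the Poisson equation and integration by parts in $x$, this contribution equals $-\tfrac{d}{dt}\bigl[\tfrac{\eps}{2}\int|\nabla\phi_\eps^n|^2 + \eps^{-2}\int(e^{\eps\phi_\eps^n}(\eps\phi_\eps^n - 1) + 1)\bigr]$, yielding conservation of $\mathcal{E}_\eps^n$ at the regularized level. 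This gives uniform bounds on $\int f_\eps^n |v|^2\,dv\,dx$, on $\sqrt\eps\,\nabla\phi_\eps^n$ in $L^2$, and on the $L\log L$-type quantity involving $e^{\eps\phi_\eps^n}$. The standard interpolation $\rho_\eps^n(x) \leq C\bigl(\int f_\eps^n|v|^2\,dv\bigr)^{3/5} \|f_\eps^n\|_{L^\infty}^{2/5}$ produces a uniform $L^{5/3}$ bound on $\rho_\eps^n$, closing the loop with the well-posedness of the nonlinear Poisson problem.

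The main obstacle is passing to the limit $n\to\infty$ in the nonlinear term $e^{\eps\phi_\eps^n}$. Weak-$\ast$ compactness gives $f_\eps^n \rightharpoonup f_\eps$ in $L^\infty_t(L^1\cap L^\infty)$, and velocity-averaging lemmas combined with the uniform kinetic energy bound upgrade this to strong compactness of $\rho_\eps^n$ in some $L^p_{t,x}$ with $p > 1$. Elliptic regularity applied to the Poisson equation then gives weak $H^1_x$-compactness of $\phi_\eps^n$; Rellich compactness and extraction of a subsequence provide pointwise a.e.\ convergence $\phi_\eps^n \to \phi_\eps$, while the $L\log L$ bound on $e^{\eps\phi_\eps^n}$ supplies the uniform integrability needed to apply Vitali's theorem. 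This identifies the weak limit of $e^{\eps\phi_\eps^n}$ as $e^{\eps\phi_\eps}$ and upgrades $\phi_\eps^n \to \phi_\eps$ to strong convergence, which is what allows passage to the limit in the full nonlinear system. Once this is done, the Vlasov equation is recovered in the sense of distributions, the energy inequality follows from the lower semicontinuity of each convex term of $\mathcal{E}_\eps$, and the conservation laws \eqref{LC1'}--\eqref{LC2'} are obtained by testing the regularized Vlasov equation against $1$ and $v$ respectively and rewriting the force term $\rho_\eps^n\nabla\phi_\eps^n$ via the Poisson equation as $-\eps^{-1}\nabla(e^{\eps\phi_\eps^n}) + \eps\nabla\cdot(\nabla\phi_\eps^n\otimes\nabla\phi_\eps^n) - \tfrac{\eps}{2}\nabla|\nabla\phi_\eps^n|^2$ before passing to the limit.
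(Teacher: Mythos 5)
The paper offers no proof of this theorem: it simply states that the result ``can be adapted from the work of Bouchut,'' and your outline --- regularization of the data and the field, exact energy and $L\log L$ a priori bounds at the approximate level, velocity averaging for strong compactness of $\rho_\eps^n$, and pointwise convergence plus Vitali (via the uniform integrability supplied by the $L\log L$ bound) to identify the limit of $e^{\eps\phi_\eps^n}$ --- is precisely the Arsenev--Bouchut construction that citation points to, so your approach is essentially the paper's. The one slip is cosmetic: the Euler--Lagrange equation of the functional you wrote is $-\eps^2\Delta\phi+\eps^{-1}e^{\eps\phi}=\rho$, so the exponential term in the variational problem should carry the weight $\eps^{-1}\int e^{\eps\phi}$ rather than $\eps^{-2}\int e^{\eps\phi}$.
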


Let us now turn to the Cauchy problem for the ZK equation, that we recall now:
\begin{equation}
\label{ZKeq}
\partial_t \phi_1 +  \phi_1 \pa_x \phi_1 + \pa_{x_1}\Delta  u_1 = 0.
\end{equation}
The only result in $3D$ we are aware of for this equation concerns the case of the whole space $\R^3$ (for results in $2D$, we refer to \cite{Fam,LP}): in \cite{LS}, Linares and Saut  proved that ZK is locally well-posed in $H^s(\R^3)$, for $s>9/8$, and more recently in \cite{RV},  Ribaud and Vento showed that it is well-posed for $s>1$. Their proofs are based on dispersive effects and can not be directly applied to the case of the torus $\T^3$. Using standard methods, we can nevertheless prove the easy theorem:
\begin{thm}
\label{ZKcauchy}
The ZK equation is locally well-posed in $H^s(\T^3)$ for $s> 5/2$.
\end{thm}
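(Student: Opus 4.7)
The plan is to run the classical energy method for quasilinear dispersive equations. The key structural observation is that the linear dispersive operator $\pa_{x_1}\Delta$ is skew-adjoint on $L^2(\T^3)$, so it contributes nothing to the $H^s$ energy balance at any order, and the analysis reduces to controlling the transport-type nonlinearity $\phi_1 \pa_{x_1}\phi_1$ by commutator techniques, exactly as one would do for a symmetric hyperbolic system. The Sobolev threshold $s>5/2$ appears precisely so that $H^s(\T^3) \hookrightarrow W^{1,\infty}(\T^3)$ (since $s>1+3/2$), which is the regularity demanded by the energy estimate.

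First, I would regularize the equation by adding a small fourth-order viscosity,
\begin{equation*}
\pa_t \phi^\delta + \phi^\delta \pa_{x_1}\phi^\delta + \pa_{x_1}\Delta \phi^\delta = -\delta \Delta^2 \phi^\delta,\qquad \phi^\delta_{|t=0}=J_\delta \phi_{1,0},
\end{equation*}
with $J_\delta$ a Friedrichs mollifier, for which standard parabolic theory yields a unique global smooth solution. Applying $\Lambda^s := (1-\Delta)^{s/2}$ to the equation and pairing with $\Lambda^s\phi^\delta$ in $L^2$, the dispersive term vanishes, the viscosity term is nonpositive, and the remaining nonlinear contribution is
\begin{equation*}
\int_{\T^3} \Lambda^s\phi^\delta \cdot \Lambda^s(\phi^\delta \pa_{x_1}\phi^\delta)\,dx = \tfrac12 \int_{\T^3} \phi^\delta \pa_{x_1}(\Lambda^s\phi^\delta)^2\,dx + R^\delta,
\end{equation*}
where the first term is integrated by parts and $R^\delta$ is the Kato-Ponce commutator remainder, bounded by $C \|\na\phi^\delta\|_{L^\infty}\|\phi^\delta\|_{H^s}^2$. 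Combining with the Sobolev embedding gives $\tfrac{d}{dt}\|\phi^\delta\|_{H^s}^2 \leq C\|\phi^\delta\|_{H^s}^3$, hence a $\delta$-independent local time $T>0$ and a uniform bound on $\|\phi^\delta\|_{L^\infty([0,T];H^s)}$ depending only on $\|\phi_{1,0}\|_{H^s}$.

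Second, the equation gives $\pa_t\phi^\delta$ uniformly bounded in $H^{s-3}$, so Aubin-Lions compactness extracts a limit $\phi_1\in L^\infty([0,T];H^s)\cap C([0,T];H^{s'})$ for every $s'<s$, and the limit solves \eqref{ZKeq} distributionally. Uniqueness follows from an $L^2$ estimate on the difference $w=\phi_1-\psi_1$ of two solutions: the equation $\pa_t w + \tfrac12\pa_{x_1}((\phi_1+\psi_1)w) + \pa_{x_1}\Delta w = 0$ loses the dispersive term after pairing with $w$, and Gronwall closes using $\phi_1+\psi_1\in L^\infty_tW^{1,\infty}$. The only genuinely subtle step, and the one I would expect to require the most care, is upgrading the weak-star time continuity to strong continuity in $H^s$ (and getting continuous dependence on the initial data in the same norm): since the energy identity does not close at the $H^s$ level of the difference without losing a derivative on the variable coefficient, this must be done by the Bona-Smith argument, propagating $J_\nu\phi_{1,0}$ alongside $\phi_{1,0}$ and exploiting extra regularity of the mollified solution interpolated against the $L^2$ difference estimate. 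This is the standard procedure and, apart from bookkeeping, presents no new difficulty on $\T^3$.
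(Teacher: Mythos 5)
Your proof is correct and is precisely the ``standard methods'' argument the paper invokes without writing out: the paper gives no proof of Theorem \ref{ZKcauchy}, merely noting it follows from classical techniques, and your energy-method scheme (parabolic regularization, skew-adjointness of $\pa_{x_1}\Delta$, Kato--Ponce commutator estimate with the threshold $s>5/2$ coming from $H^{s}(\T^3)\hookrightarrow W^{1,\infty}(\T^3)$, $L^2$ uniqueness, and Bona--Smith for strong $H^s$ continuity and continuous dependence) is exactly that intended argument. No discrepancy with the paper.
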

This will be sufficient for our needs.

\subsection{Rigorous convergence result}
Contrary to the 1D case, we will not present the formal analysis allowing to guess that the limit equation is ZK. The principle is indeed identical, but the computations become quite lengthy. Let us refer to \cite{LLS} for that point.

We state directly the theorem asserting the convergence to ZK:

\begin{thm}
\label{ZK}
Let $(f_{\eps,0})_{\eps \in (0,1)}$ be a family of non-negative initial data satisfying the assumptions of Theorem \ref{Cauchy3D} and such that there exists $C>0$ with:
\begin{equation}
\mathcal{E}_\eps(0) \leq C, \, \, \forall \eps \in (0,1).
\end{equation}

We denote by $(f_\eps)_{\eps \in (0,1)}$ a sequence of global weak solutions to \eqref{VP3D} given by Theorem \ref{Cauchy3D}.
Let $\mathcal{H}_\eps$ be the relative entropy defined by the functional:
\begin{equation}
\begin{aligned}
\mathcal{H}_\eps (t) &:= \frac{1}{2} \int f_\eps \Big[| v_{1} - u^{(1)}_{1} - \eps u^{(2)}_{1} |^2 \\
&\quad \quad \quad \quad+ | v_2 - \sqrt{\eps} u^{(1)}_{2} - \eps u^{(2)}_{2} |^2  +  | v_3 - \sqrt{\eps} u^{(1)}_3 - \eps u^{(2)}_3|^2\Big] \, dv \, dx \\ 
&+ \frac{1}{2} \eps \int | \na_x \phi_\eps - \na_x \phi_1 - \eps \na_x \phi_2 -\eps^2 \na_x \phi_3 |^2 \,dx \\
&+ \frac{1}{\eps^2} \int  \left( e^{\eps \phi_\eps} \log (e^{\eps (\phi_\eps)}/e^{\eps(\phi_1 + \eps \phi_2+\eps^2 \phi_3)}) - e^{\eps \phi_\eps} + e^{\eps(\phi_1 + \eps \phi_2+\eps^2 \phi_3)}\right) \, dx,
\end{aligned}
\end{equation}
where 
\begin{multline*}
(\phi_1, u^{(1)}_1,u^{(1)}_2,u^{(1)}_3, u^{(2)}_1, u^{(2)}_2, u^{(2)}_3,   \phi_2, \phi_3) \\
\in [C([0,T_0[, H^{s+2}(\T^3))]^2 \times [C([0,T_0[, H^{s+1}(\T^3))]^2 \times[C([0,T_0[, H^s(\T^3))]^5
\end{multline*}
(with $s>3/2+1$, $T_0>0$) satisfy the following system on $[0,T_0[$:
\begin{equation}
\label{iden2}
\left\{
    \begin{aligned}
&  2\partial_t \phi_1 + 2 \phi_1 \pa_x \phi_1 + \pa_{x_1}(\Delta + \Delta_\perp)  u_1 = 0,  \\
&   u^{(1)}_1 = \phi_1, \quad u^{(1)}_2 = -\pa_{x_3} \phi_1, \quad  u^{(1)}_3 = \pa_{x_2} \phi_1, \\
& \phi_2  = \pa^2_{x_1 x_1} \phi_1, \\
&  u^{(2)}_2 = \pa^2_{x_2 x_2} \phi_1, \quad u^{(2)}_3 = \pa^2_{x_3 x_3} \phi_1, \\ 
&  \pa_{x_1}  u^{(2)}_1 = \pa_t u_1 + u_1 \pa_x u_1 + \pa^3_{x_1 x_1 x_1} \phi_1, \\
 &\pa_{x_1} \phi_3= - \pa_t \phi_2  - \pa_{x_1}(u^{(2)}_1  \phi_1) - u^{(1)}_1 \pa_{x_1} \phi_2 -  u^{(1)}_2 \pa_{x_2} \phi_2 + u^{(1)}_3 \pa_{x_3} \phi_2 .
   \end{aligned}
  \right.
\end{equation}
Then there exist $C_1,C_2>0$, such that for all $\eps \in (0,1)$, 
\begin{equation}
\forall t \in [0,T_0[, \, \, \mathcal{H}_\eps (t) \leq \int_0^t  (C_1 \mathcal{H}_\eps (s) \, ds  + C_2 \sqrt{\eps}) \, ds.
\end{equation}
Assuming in addition that there exists $C_3$ such that for any $\eps \in (0,1)$:
\begin{equation}
\mathcal{H}_\eps (0) \leq C_3  \sqrt{\eps},
\end{equation}
then we obtain for any $\eps \in (0,1)$:
\begin{equation}
\label{gron2}
\forall t \in [0,T_0[, \quad \mathcal{H}_\eps (t) \leq C_3   e^{C_1 t}  \sqrt{\eps} + C_2 \frac{e^{C_1 t}-1}{C_1} \sqrt{\eps}.
\end{equation}

\end{thm}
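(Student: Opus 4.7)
The plan is to repeat the blueprint of Theorem \ref{KdV}: since $\mathcal{H}_\eps$ is built as a modulation of the non-increasing energy $\mathcal{E}_\eps$ from Theorem \ref{Cauchy3D}, I would write
$$\mathcal{H}_\eps(t) \le \mathcal{E}_\eps(0) + \int_0^t (I_1 + I_2 + I_3)(s)\, ds,$$
where $I_1$, $I_2$, $I_3$ collect the time derivatives of, respectively, the kinetic piece, the $\eps|\na\phi_\eps|^2$ piece, and the $L\log L$ piece of the modulation, and then differentiate in time using \eqref{VP3D} together with the local conservation laws \eqref{LC1'}--\eqref{LC2'}. Three new features absent from Section \ref{sec2} must be handled: the magnetic force $(v\wedge e_1)/\sqrt\eps$ is of larger order than any other force, the three-dimensional anisotropic cascade demands the long list of correctors in \eqref{iden2}, and the full Maxwell--Boltzmann law only provides an $L\log L$ control on $\phi_\eps$ instead of the $L^2$ bound available in the 1D linearized case.

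For $I_1$, I would expand $\pa_t f_\eps$ via the Vlasov equation and integrate in $v$ against $\tfrac12|U|^2 - v\cdot U$, with $U = (u^{(1)}_1 + \eps u^{(2)}_1,\, \sqrt\eps u^{(1)}_2 + \eps u^{(2)}_2,\, \sqrt\eps u^{(1)}_3 + \eps u^{(2)}_3)$. Since $v\wedge e_1 = v_3 e_2 - v_2 e_3$, the magnetic term yields the contribution $\eps^{-1/2}\int(J_{\eps,3}U_2 - J_{\eps,2}U_3)\, dx$ plus lower-order pieces; its leading $\eps^{-1}$ part, combined with the $\Oc(\eps^{-1})$ terms coming from $E_\eps$ and from the transport $-\pa_{x_1}$, forces the cyclotron drift relations $u^{(1)}_2 = -\pa_{x_3}\phi_1$, $u^{(1)}_3 = \pa_{x_2}\phi_1$ and the longitudinal constraint $u^{(1)}_1 = \phi_1$ of \eqref{iden2}. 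Matching the next orders in $\eps$ then successively produces the relations defining $\phi_2$, $u^{(2)}_1$, $u^{(2)}_2$, $u^{(2)}_3$ and $\phi_3$, and ultimately imposes the ZK evolution of $\phi_1$. The piece $I_2$ is controlled as in the 1D proof by $|ab|\le \tfrac12(a^2+b^2)$ plus the uniform energy bound, giving $|I_2| \le C\mathcal{H}_\eps + \Oc(\eps)$.

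The term $I_3$ is the main obstacle. I would use $\pa_t(e^{\eps\phi_\eps}(\eps\phi_\eps-1)+1)/\eps^2 = \phi_\eps e^{\eps\phi_\eps}\pa_t\phi_\eps$ and then differentiate the Poisson equation in time, $-\eps^2\Delta\pa_t\phi_\eps + e^{\eps\phi_\eps}\pa_t\phi_\eps = \pa_t\rho_\eps = \eps^{-1}\pa_{x_1}\rho_\eps - \na_x\cdot J_\eps$, to convert the time derivatives of $\phi_\eps$ into spatial fluxes that recombine with the hydrodynamic terms produced in $I_1$. The coercivity of this piece of $\mathcal{H}_\eps$ rests on the convexity inequality $e^a\log(e^a/e^b) - e^a + e^b \ge \tfrac12\min(e^a,e^b)|a-b|^2$, which identifies the exponential relative entropy with a weighted $L^2$ distance between $\phi_\eps$ and $\phi_1 + \eps\phi_2 + \eps^2\phi_3$. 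The correctors $\phi_2$ and $\phi_3$ are chosen precisely so that the Taylor expansion $e^{\eps\phi} = 1 + \eps\phi + \tfrac{\eps^2}{2}\phi^2 + \tfrac{\eps^3}{6}\phi^3 + \Oc(\eps^4)$ matches the expansion of the Poisson equation up to order $\eps^2$ included; the delicate point will be to check that the cubic and higher remainders, which are only controlled in $L\log L$ and not in $L^2$, contribute at worst an $\Oc(\sqrt\eps)$ error after multiplication by the extra $\eps$ factors, for which I would combine the $L\log L$ trick already used in \cite{HK11} with the $H^s$-bounds on the correctors.

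After the cancellations encoded in \eqref{iden2}, the surviving terms are quadratic in $v - U$, in $\eps^{1/2}\na(\phi_\eps - \phi_1 - \eps\phi_2 - \eps^2\phi_3)$, and in the exponential relative entropy, each weighted by an $L^\infty$ function of $u_1, \phi_1$ and their derivatives (bounded on $[0,T_0[$ thanks to $s > 5/2$ and Sobolev embedding). This produces $\mathcal{H}_\eps(t) \le \int_0^t (C_1\mathcal{H}_\eps(s) + C_2\sqrt\eps)\, ds$, and Gronwall's lemma yields \eqref{gron2}. Beyond the $L\log L$ difficulty, a secondary technical issue will be the sheer bookkeeping of the corrector cascade: nine unknowns in \eqref{iden2} and cancellations to be verified at three successive orders in $\eps$, working only from the weak formulation and the local conservation laws \eqref{LC1'}--\eqref{LC2'}, exactly as in the proof of Theorem \ref{KdV}.
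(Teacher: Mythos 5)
Your blueprint coincides with the paper's: modulate the non-increasing energy, split the time derivative of the modulation into the kinetic pieces, the $\eps|\na_x\phi_\eps|^2$ piece and the $L\log L$ piece, convert $\pa_t\phi_\eps$ into spatial fluxes via the time-differentiated Poisson equation and the local conservation laws \eqref{LC1'}--\eqref{LC2'}, read off the cascade of cancellations (which is exactly \eqref{iden2}), and close with Gronwall. The identification of the magnetic term as the source of the drift relations $u^{(1)}_2=-\pa_{x_3}\phi_1$, $u^{(1)}_3=\pa_{x_2}\phi_1$, and the treatment of $I_2$ by Young's inequality plus the energy bound, are as in the paper.

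The one place where your sketch does not actually close is the step you yourself flag as delicate, and the tool you propose for it points in the wrong direction. After substituting $\rho_\eps = e^{\eps\phi_\eps}-\eps^2\Delta\phi_\eps$ in the terms $K_1,\dots,K_6$, the proof must show that a quantity of the form $\frac{1}{\eps}\int(e^{\eps\phi_\eps}-\eps\phi_\eps)\,\Delta\pa_{x_1}\phi_1\,dx$ is $\Oc(\sqrt\eps)$, with \emph{no} $L^2$ control on $\phi_\eps$ available. The coercivity inequality you quote, $e^a\log(e^a/e^b)-e^a+e^b\ge\frac12\min(e^a,e^b)|a-b|^2$, lower-bounds the relative entropy and is useful for interpreting the convergence, but it does not control this error term. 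What the paper uses (Lemma \ref{tech}) is the \emph{upper} bound $(\sqrt x-\sqrt y)^2\le x\log(x/y)-x+y$ taken with $y=1$, i.e. an $L^2$ bound on $e^{\eps\phi_\eps/2}-1$ deduced from the \emph{energy} (not the relative entropy), combined with the algebraic splitting
\begin{equation*}
e^{\eps\phi_\eps}-\eps\phi_\eps \;=\; \bigl(e^{\frac12\eps\phi_\eps}-1\bigr)^2 \;+\; 2\bigl(e^{\frac12\eps\phi_\eps}-\tfrac12\eps\phi_\eps\bigr)\;-\;1,
\end{equation*}
where the first piece is handled by \eqref{ineq} against the mean-zero weight $\Delta\pa_{x_1}\phi_1$, and the second by an integration by parts followed by a Young inequality with the weight $\eps^{3/2}$ balancing $\eps\int|\na_x\phi_\eps|^2$ against the $L\log L$ energy term. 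This splitting is the genuinely new ingredient of the 3D proof relative to Section \ref{sec2}, and your appeal to ``the $L\log L$ trick plus $H^s$ bounds'' does not by itself produce the $\Oc(\sqrt\eps)$ rate. The rest of your outline (the bookkeeping of the nine correctors and the three orders of cancellation) matches the paper's conclusion section.
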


In this theorem, we need a large number of correctors in the relative entropy, precisely because of the nonlinear exponential term in the Poisson equation (compare with the case of Theorem \ref{KdV}, where this term is linearized). In order to get functions satisfying \eqref{iden2}, we can proceed as follows:
\begin{itemize}
\item The existence of $\phi_1$ is ensured by Theorem \ref{ZKcauchy} (actually the first equation of \eqref{iden2} is slightly different from \eqref{ZKeq}, but we can come down to \eqref{ZKeq} by using some standard change of variables, see for instance \cite{LS}).
\item We observe that the six correctors  $(u^{(1)}_2,u^{(1)}_3, u^{(2)}_1, u^{(2)}_2, u^{(2)}_3,   \phi_2)$ have their value which is uniquely determined (contrary to the $1D$ case). We accordingly set:
\begin{equation*}
\left\{
    \begin{aligned}
&   u^{(1)}_1 = \phi_1, \quad u^{(1)}_2 = -\pa_{x_3} \phi_1, \quad  u^{(1)}_3 = \pa_{x_2} \phi_1, \\
& \phi_2  = \pa^2_{x_1 x_1} \phi_1, \\
&  u^{(2)}_2 = \pa^2_{x_2 x_2} \phi_1, \quad u^{(2)}_3 = \pa^2_{x_3 x_3} \phi_1, \\ 
&  \pa_{x_1}  u^{(2)}_1 = \pa_t u_1 + u_1 \pa_x u_1 + \pa^3_{x_1 x_1 x_1} \phi_1,.\\
   \end{aligned}
  \right.
\end{equation*}

\item Finally, $\phi_3$ is a high order corrector whose value is imposed by the last equation of \eqref{iden2}.
\end{itemize}

\begin{rque} Note that a global well-posedness result for ZK in $\T^3$ would yield global in time estimates (that is $T_0=+\infty$) in this theorem, which would be significant for logarithmically growing times, as for Theorem \ref{KdV}.
\end{rque}

\begin{rque}
From \eqref{gron2}, we deduce that:
\begin{multline}
\frac{1}{\eps^2} \int  \left( e^{\eps \phi_\eps} \log (e^{\eps (\phi_\eps)}/e^{\eps(\phi_1 + \eps \phi_2+\eps^2 \phi_3)}) - e^{\eps \phi_\eps} + e^{\eps(\phi_1 + \eps \phi_2+\eps^2 \phi_3)}\right) \, dx \\
\leq C_3   e^{C_1 t}  \sqrt{\eps} + C_2 \frac{e^{C_1 t}-1}{C_1} \sqrt{\eps}.
\end{multline}
Following the terminology in the Boltzmann literature (see for instance the book \cite{SRbook}), this roughly means that $e^{\eps \phi_\eps}$ ``converges entropically'' to $e^{\eps(\phi_1 + \eps \phi_2+\eps^2 \phi_3)}$.

From the elementary inequality \eqref{ineq} (which will be given later), we can also deduce the control:
\begin{equation}
\frac{1}{\eps^2} \int  \left( e^{\frac{1}{2} \eps \phi_\eps} - e^{\frac{1}{2} \eps(\phi_1 + \eps \phi_2+\eps^2 \phi_3)}\right)^2 \, dx 
\leq C_3   e^{C_1 t}  \sqrt{\eps} + C_2 \frac{e^{C_1 t}-1}{C_1} \sqrt{\eps}.
\end{equation}

\end{rque}

We have as before the corollary:

\begin{coro}
\label{coro2}
Making the same assumptions as in the previous theorem, we obtain the  weak convergences:
\begin{equation}
\left\{
\begin{aligned}
&\rho_\eps \rightharpoonup_{\eps \rightarrow 0} 1  \text{  in  } L^\infty_t \mathcal{M}^1 \text{  weak-*}, \\
&J_\eps \rightharpoonup_{\eps \rightarrow 0} (\phi_1,0,0)  \text{  in  } L^\infty_t \mathcal{M}^1 \text{  weak-*}.
\end{aligned}
\right.
\end{equation}
\end{coro}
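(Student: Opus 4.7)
The plan is to mirror the proof of Corollary \ref{coro1}, the only genuine new difficulty being the passage to the limit in the nonlinear Poisson equation with the full Maxwell-Boltzmann law.

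First, I would extract uniform $L^1$ bounds on the moments. Conservation of mass gives $\|\rho_\eps\|_{L^1}=1$, and Cauchy-Schwarz in $v$ combined with nonnegativity of $f_\eps$ and the uniform energy bound $\int f_\eps |v|^2\,dv\,dx \le C$ yields $\|J_\eps\|_{L^1}\le C$. Therefore, up to a subsequence, there exist a nonnegative measure $\rho$ and a vector measure $J$ with $\rho_\eps\rightharpoonup\rho$ and $J_\eps\rightharpoonup J$ in the vague sense of measures, and it suffices to identify the limits.

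The main obstacle is the identification $\rho=1$, obtained by passing to the limit in $-\eps^2\Delta\phi_\eps+e^{\eps\phi_\eps}=\rho_\eps$. The first contribution vanishes in $\mathcal{D}'$ since $\sqrt\eps\,\nabla\phi_\eps$ is bounded in $L^2$ by the energy. For the exponential term I would use the $L^2$ control from the second remark after Theorem \ref{ZK}:
\[
\frac{1}{\eps^2}\int\Bigl(e^{\frac12\eps\phi_\eps}-e^{\frac12\eps(\phi_1+\eps\phi_2+\eps^2\phi_3)}\Bigr)^{2}\,dx \le C\sqrt\eps.
\]
Writing $e^{\eps\phi_\eps}-e^{\eps(\phi_1+\eps\phi_2+\eps^2\phi_3)}=\bigl(e^{\frac12\eps\phi_\eps}-e^{\frac12\eps(\phi_1+\eps\phi_2+\eps^2\phi_3)}\bigr)\bigl(e^{\frac12\eps\phi_\eps}+e^{\frac12\eps(\phi_1+\eps\phi_2+\eps^2\phi_3)}\bigr)$, and combining this with the uniform $L^1$ bound on $e^{\eps\phi_\eps}$ granted by the $L\log L$ part of the energy (through Cauchy-Schwarz), one gets $\|e^{\eps\phi_\eps}-e^{\eps(\phi_1+\eps\phi_2+\eps^2\phi_3)}\|_{L^1}\to 0$. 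Since $e^{\eps(\phi_1+\eps\phi_2+\eps^2\phi_3)}\to 1$ uniformly (as $\phi_1,\phi_2,\phi_3$ are bounded smooth functions on $[0,T_0[\times\T^3$), this forces $e^{\eps\phi_\eps}\to 1$ in $L^1$, hence in $\mathcal{D}'$, and therefore $\rho=1$.

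For the identification of $J$, I would reuse the convexity argument of \cite{Br00}. Setting
\[
U_\eps:=\bigl(u^{(1)}_{1}+\eps u^{(2)}_{1},\ \sqrt\eps\,u^{(1)}_{2}+\eps u^{(2)}_{2},\ \sqrt\eps\,u^{(1)}_{3}+\eps u^{(2)}_{3}\bigr),
\]
Cauchy-Schwarz in velocity gives the pointwise inequality
\[
\frac{|J_\eps-\rho_\eps U_\eps|^{2}}{\rho_\eps} \le \int f_\eps\,|v-U_\eps|^{2}\,dv,
\]
and integrating in $x$ the right-hand side is bounded by $2\mathcal{H}_\eps(t)=O(\sqrt\eps)$ by \eqref{gron2}. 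The convex lower semi-continuous functional $(\rho,J)\mapsto \int\frac{|J-\rho W|^{2}}{\rho}\,dx$ (for any fixed smooth $W$), applied with $W=(\phi_1,0,0)$, the uniform limit of $U_\eps$, together with $\rho=1$, then yields $\int |J-(\phi_1,0,0)|^{2}\,dx\le 0$, i.e.\ $J=(\phi_1,0,0)$. Uniqueness of the limits removes the need for extraction and gives the stated weak-$*$ convergences.
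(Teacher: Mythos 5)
Your proof is correct and follows exactly the route the paper intends: it mimics the proof of Corollary \ref{coro1} (the paper omits the details, calling them ``obvious modifications''), and you correctly supply the one genuinely new step, namely identifying $\rho=1$ through the nonlinear Poisson equation by showing $e^{\eps\phi_\eps}\to 1$ in $L^1$ via the entropic/$L^2$ control on $e^{\eps\phi_\eps/2}-e^{\eps(\phi_1+\eps\phi_2+\eps^2\phi_3)/2}$. As a minor simplification, the uniform $L^1$ bound on $e^{\eps\phi_\eps}$ also follows directly by integrating the Poisson equation over $\T^3$, which gives $\int e^{\eps\phi_\eps}\,dx=\int\rho_\eps\,dx=1$.
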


Up to some obvious modifications, the proof of Corollary \ref{coro2} is similar to that of Corollary \ref{coro1}, and therefore we omit it.

\subsection{Proof of Theorem \ref{ZK}}
Relying on the fact the energy $\mathcal{E}_\eps(t)$ is a non-increasing function of time (and that $\Hc_\eps$ is built as a modulation of the energy), we have for all $t\in [0,T_0[$ and all $\eps \in (0,1)$:
\begin{align*}
\Hc_\eps (t) &\leq \Hc_\eps(0) +  \int_0^t  \int \pa_t \left[ f_\eps \left( \frac{1}{2} |u^{(1)}_1 + \eps u^{(2)}_1 |^2 - v_1 \, (u^{(1)}_1 + \eps u^{(2)}_1) \right) \right] \, dv \, dx  \, ds \\
&+  \int_0^t \int \pa_t \left[ f_\eps \left( \frac{1}{2} |\sqrt{\eps} u^{(1)}_2 + \eps u^{(2)}_2 |^2 - v_2 \, (\sqrt{\eps} u^{(1)}_2 + \eps u^{(2)}_2) \right) \right] \, dv \, dx  \, ds \\
&+  \int_0^t \int \pa_t \left[ f_\eps \left( \frac{1}{2} |\sqrt{\eps} u^{(1)}_3 + \eps u^{(2)}_3 |^2 - v_3 \, (\sqrt{\eps} u^{(1)}_3 + \eps u^{(2)}_3) \right) \right] \, dv \, dx  \, ds \\
&+ \eps  \int_0^t \int \pa_t \left[ \frac{1}{2} | \na_x \phi_1 + \eps \na_x \phi_2 |^2 - \na_x \phi_\eps \cdot (\na_x \phi_1 + \eps \na_x \phi_2)\right ] \, dx  \, ds \\
&+ \frac{1}{\eps^2}  \int_0^t \int \pa_t \left [e^{\eps \phi_\eps} \log (1/ e^{\eps(\phi_1 + \eps \phi_2 + \eps^2 \phi_3)}) + e^{\eps(\phi_1 + \eps \phi_2 +\eps^2 \phi_3)}\right] \, dx  \, ds\\
&:=\Hc_\eps(0) + \int_0^t ( I_1^1 + I_1^2 + I_1^3 + I_2 + I_3) \, ds.
\end{align*}

The general strategy in the proof will be to keep (without making approximation) all dangerous modulated terms of the form
\begin{multline*}
\eps^{\alpha}\int f_\eps \begin{pmatrix} u^{(1)}_1 + \eps u^{(2)}_1 - v_1 \\ \sqrt{\eps} u^{(1)}_2 + \eps u^{(2)}_2 -v_2 \\  \sqrt{\eps} u^{(1)}_3 + \eps u^{(2)}_3 -v_3\end{pmatrix} [...] \, dv \, dx \, \\ \text  {and  } \\
\eps^\alpha \int (-e^{\eps \phi_\eps} + e^{\eps (\phi_1 + \eps \phi_2 + \eps^2 \phi_3)})[...] \, dx,
\end{multline*}
where $[...] $ contains some expression independent of $\eps$, as soon as $\alpha\leq 0$. Then \eqref{iden2} is precisely designed so that all terms exactly vanish in the end.

On the contrary, for $\alpha>0$, these terms will be of order $\eps^\alpha$ (and thus small). 
For the first type of terms, this can be seen with the same argument as in the proof of theorem \ref{KdV}, namely the uniform (in $\eps$) bounds on the $L^1$ norm of $\rho_\eps$ and $J_\eps$. For the second type of terms, one has to use the Poisson equation satisfied by $\phi_\eps$ and use the bound on the electric energy. Indeed, given some smooth function $\Psi$, we can write:
\begin{align*}
\eps^\a \int e^{\eps \phi_\eps} \Psi \, dx =  \eps^\a \int \eps^2 \Delta \phi_\eps \Psi \, dx  + \eps^\a \int \rho_\eps \Psi \, dx.
\end{align*}
The second term is clearly of order $\eps^\a$, using the uniform $L^1$ bound on $\rho_\eps$. On the other hand, by integration by parts, we have for the first term
\begin{align*}
 \eps^\a \int \eps^2 \Delta \phi_\eps \Psi \, dx = \eps^{\a+2} \int \na \phi_\eps \cdot \na \Psi \, dx, 
\end{align*}
which is of order $\eps^ {\a+1}$ using the uniform bound obtained thanks to the conservation of energy:
$$
\eps \int |\na \phi_\eps |^2 \, dx \leq \mathcal{E}_\eps(0) \leq C.
$$
Finally any term like $\eps^\alpha \int e^{\eps (\phi_1 + \eps \phi_2 + \eps^2 \phi_3)}[...] \, dx$ is clearly of order $\eps^\alpha$.

We start by studying separately $I_1^1 + I_1^2 + I_1^3$, $I_2$ and $I_3$. All computations are justified using only the local conservation laws \eqref{LC1'} and \eqref{LC2'}.

\medskip
{\bf Study of $I_1^1 + I_1^2 + I_1^3$}.

With similar computations as in the proof of Theorem \ref{KdV}, we obtain the identity
\begin{align*}
&I_1^1 + I_1^2 + I_1^3 = \\
&\frac{1}{\eps} \int f_\eps \begin{pmatrix} u^{(1)}_1 + \eps u^{(2)}_1 - v_1 \\ \sqrt{\eps} u^{(1)}_2 + \eps u^{(2)}_2 -v_2 \\  \sqrt{\eps} u^{(1)}_3 + \eps u^{(2)}_3 -v_3\end{pmatrix} \cdot \begin{pmatrix} - \pa_{x_1} u^{(1)}_1 + \pa_{x_1} \phi_1 \\ -u^{(1)}_3 + \pa_{x_2} \phi_1 \\ u^{(1)}_2 + \pa_{x_3} \phi_1 \end{pmatrix} \, dv \, dx \\
&- \int \frac{1}{\eps} \rho_\eps u^{(1)} \cdot \nabla_x \phi_1 \, dx - \int  \rho_\eps u^{(2)} \cdot \nabla_x \phi_1 \, dx+ \int \frac{1}{\eps} J_\eps \cdot \nabla_x \phi_1 \, dx \\
&+ \frac{1}{\sqrt{\eps}} \int f_\eps \begin{pmatrix} \sqrt{\eps} u^{(1)}_2 + \eps u^{(2)}_2 -v_2 \\  \sqrt{\eps} u^{(1)}_3 + \eps u^{(2)}_3 -v_3\end{pmatrix} \cdot \begin{pmatrix} -u^{(2)}_3 - \pa_{x_1} u^{(1)}_2 \\ u^{(2)}_2 - \pa_{x_1} u^{(1)}_3  \end{pmatrix} \, dv \, dx \\
&+ \int f_\eps \begin{pmatrix} u^{(1)}_1 + \eps u^{(2)}_1 - v_1 \\ \sqrt{\eps} u^{(1)}_2 + \eps u^{(2)}_2 -v_2 \\  \sqrt{\eps} u^{(1)}_3 + \eps u^{(2)}_3 -v_3\end{pmatrix} \cdot \begin{pmatrix} \pa_t u^{(1)}_1 + u^{(1)}_1 \pa_{x_1} u^{(1)}_1 - \pa_{x_1} u^{(1)}_2 + \pa_{x_1} \phi_2 \\ - \pa_{x_1} u^{(2)}_2 + \pa_{x_2} \phi_2 \\  - \pa_{x_1} u^{(2)}_3 + \pa_{x_3} \phi_2 \end{pmatrix} \, dv \, dx  \\
&+ \int J_\eps \cdot \na_x \phi_2 \, dx  -  \int \rho_\eps u^{(1)} \cdot \na_x \phi_2 \, dx  - \eps \int \rho_\eps u^{(2)} \cdot \na_x \phi_2 \, dx \\
& - \int f_\eps ( u^{(1)}_1 + \eps u^{(2)}_1 - v_1)^2  \pa_x u^{(1)}_1 \, dx\\
&- \int \frac{1}{\eps} \rho_\eps E_\eps \cdot u^{(1)}  \, dx  - \int  \rho_\eps E_\eps  \cdot u^{(2)} \, dx \\
& + \mathcal{O}(\eps).
\end{align*}
In this equation, $\mathcal{O}(\eps)$ is as usual a notation for all terms that can bounded by $C \eps$, where $C$ is a positive constant. We also denote here: 
$$u^{(1)} := (u^{(1)}_1, \sqrt{\eps} u^{(1)}_2, \sqrt{\eps} u^{(1)}_2)  \text{  and  } u^{(2)} := (u^{(2)}_1,u^{(2)}_2,  u^{(2)}_2).$$

We observe that we can write:
\begin{multline*}
- \int \frac{1}{\eps} \rho_\eps E_\eps \cdot u^{(1)}  \, dx  - \int \frac{1}{\eps}  \rho_\eps  u^{(1)} \cdot \na_x \phi_1 \, dx -  \int \rho_\eps u^{(1)} \cdot\na_x \phi_2  \, dx \\
= - \frac{1}{\eps} \int \rho_\eps u_1 \cdot ( \na_x \phi_1 + \eps \na_x \phi_2  - \na_x \phi_\eps).
\end{multline*}

\medskip
{\bf Study of $I_2$}.

We obtain:
\begin{multline*}
I_2= \eps \int \pa_t (\na_x \phi_1 + \eps \na_x  \phi_2 + \eps^2 \na_x \phi_3)\cdot  (-\na_x  \phi_\eps  + \na_x  \phi_1 + \eps \na_x \phi_2 + \eps^2 \na_x \phi_3) \, dx \\
- \eps \int \pa_t \na_x  \phi_\eps \cdot (  \na_x  \phi_1 + \eps \na_x  \phi_2 + \eps^2 \na_x \phi_3) \, dx.
\end{multline*}

We have the easy bound:
\begin{align*}
&\eps \left|\int \pa_t (\na_x \phi_1 + \eps \na_x  \phi_2 + \eps^2 \na_x \phi_3)\cdot  (-\na_x  \phi_\eps  + \na_x  \phi_1 + \eps \na_x \phi_2 + \eps^2 \na_x \phi_3) \, dx\right| \\
&\leq C \left[\eps  \int (\pa_t  (\na_x \phi_1 + \eps \na_x  \phi_2 + \eps^2 \na_x \phi_3))^2 \, dx + \eps \int (-\pa_x  \phi_\eps + \pa_x  \phi_1 + \eps \pa_x \phi_2)^2 \, dx \right] \\
&\leq \mathcal{O}(\eps) + C \Hc_\eps(t).
\end{align*}

\medskip
{\bf Study of $I_3$}.

We can compute:
\begin{align*}
I_3 &= \frac{1}{\eps} \int (-e^{\eps \phi_\eps} + e^{\eps (\phi_1 + \eps \phi_2 + \eps^2 \phi_3)}) \pa_t (\phi_1 + \eps \phi_2 +\eps^2 \phi_3) \, dx \\
& - \frac{1}{\eps} \int \pa_t (e^{\eps \phi_\eps})( \phi_1 + \eps \phi_2 +\eps^2 \phi_3) \, dx \\
&= I_3^1 + I_3^2.
\end{align*}

The first term can be recast as follows:
\begin{align*}
I_3^1 =  \frac{1}{\eps} \int (-e^{\eps \phi_\eps} + e^{\eps (\phi_1 + \eps \phi_2 + \eps^2 \phi_3)}) \pa_t \phi_1  \, dx  + \int (-e^{\eps \phi_\eps} + e^{\eps (\phi_1 + \eps \phi_2 + \eps^2 \phi_3)}) \pa_t   \phi_2 \, dx \\
+   \eps \int (-e^{\eps \phi_\eps} + e^{\eps (\phi_1 + \eps \phi_2 + \eps^2 \phi_3)}) \pa_t   \phi_3. 
\end{align*}

For the second one, we use the Poisson equation (with a time derivative) and the conservation of charge in shifted variables, that we recall below:
\begin{equation*}
\left\{
\begin{aligned}
&\pa_t (e^{\eps \phi_\eps})=  \eps^2 \Delta \pa_t \phi_\eps + \pa_t \rho_\eps, \\
 &\frac{1}{\eps} \pa_t \rho_\eps =  \frac{1}{\eps^2}\pa_{x_1} \rho_\eps -  \frac{1}{\eps} \na_x \cdot J_\eps,
\end{aligned}
\right.
\end{equation*}
to obtain:
\begin{multline*}
I_3^2 = -\frac{1}{\eps^2} \int \pa_{x_1} \rho_\eps ( \phi_1 + \eps \phi_2 + \eps^2 \phi_3 ) \, dx   \\
+ \frac{1}{\eps} \int \na_x \cdot J_\eps  \phi_1 \, dx + \int \na_x \cdot J_\eps \phi_2 \, dx  + \eps \int \na_x \cdot J_\eps \,  \phi_3 \, dx \\
-\eps \int \pa_t \Delta \phi_\eps (\phi_1 + \eps \phi_2 + \eps^2 \phi_3).
\end{multline*}

As in the proof of Theorem \ref{KdV}, every term in $I_3^2$ but the first one gets simplified with some other ones obtained in $I_1$ and $I_2$.

\medskip
{\bf Study of the remaining significant terms}.

There remain to study the following potentially harmful terms:
\begin{align*}
K_1 &:= - \int \rho_\eps u^{(2)}\cdot  \na_x( \phi_1  +\eps \phi_2 ) \, dx, \quad
K_2 := - \int \rho_\eps E_\eps \cdot u^{(2)}\, dx, \\
K_3 &:= - \frac{1}{\eps} \int  \rho_\eps u^{(1)} \cdot  (  \na_x \phi_1 + \eps \na_x \phi_2 - \na_x \phi_\eps) \, dx, \\
K_4 &:= - \frac{1}{\eps^2} \int \pa_{x_1} \rho_\eps (\phi_1 + \eps \phi_2  + \eps^2 \phi_3) \, dx, \\
K_5 &:= \frac{1}{\eps} \int (-e^{\eps \phi_\eps} + e^{\eps (\phi_1 + \eps \phi_2 + \eps^2 \phi_3)}) \pa_t \phi_1  \, dx. \\
K_6 &:=  \int (-e^{\eps \phi_\eps} + e^{\eps (\phi_1 + \eps \phi_2 + \eps^2 \phi_3)}) \pa_t   \phi_2 \, dx.
\end{align*}

We start with $K_1 + K_2$, using the Poisson equation:
\begin{align*}
K_1 + K_2  &= - \int e^{\eps \phi_\eps} u^{(2)}\cdot \na_x ( \phi_1  + \eps \phi_2 - \phi_\eps) \, dx  \\
&+ \eps^2 \int \Delta \phi_\eps u^{(2)} \cdot ( \na_x \phi_1 + \eps \na_x \phi_2 - \na_x \phi_\eps) \, dx. \\
&= L_1 + L_2.
\end{align*}

We have:
\begin{align*}
L_1  =  - \int e^{\eps \phi_\eps} u^{(2)}\cdot \na_x  \phi_1  \, dx - \eps  \int e^{\eps \phi_\eps} u^{(2)}\cdot \na_x  \phi_2   \, dx \\
- \frac{1}{\eps} \int  e^{\eps \phi_\eps} \na_x \cdot u^{(2)} \, dx.
\end{align*}

For $L_2$, we have
\begin{align*}
L_2  &=   \eps^2 \int \Delta \phi_\eps u^{(2)} \cdot ( \na_x \phi_1 + \eps \na_x \phi_2 ) \, dx
-  \eps^2 \int \Delta \phi_\eps u^{(2)} \cdot   \na_x \phi_\eps \, dx \\
&:= L_2^1 + L_2^2.
\end{align*}

By integration by parts, we get:
\begin{align*}
|L_2^1|  &=  \left| \eps^2 \int \na \phi_\eps \cdot u^{(2)}  ( \Delta \phi_1 + \eps \Delta \phi_2 ) \, dx \right| \\
&\leq C \left( \eps^2 \int | \na \phi_\eps |^2 \, dx + C \eps  \right) \\
&\leq  C (\eps \mathcal{E}_\eps(t)  + C \eps) \leq C \eps.
\end{align*}
Note here that we have used the Lipschitz bound on the second derivative of $\phi_1$ and $\phi_2$.

For $L_2^2$, we rely on the usual trick to write:
\begin{align*}
L_2^2  &=  -\frac{1}{2} \eps^2 \int \na_x |\na_x \phi_\eps|^2  \cdot u^{(2)} \, dx \\
&= \frac{1}{2} \eps^2 \int  |\na_x \phi_\eps|^2  \, \na_x \cdot u^{(2)} \, dx,
\end{align*}
from which we deduce that
$$
|L_2^2| \leq C \eps.
$$

Using the Poisson equation for $K_3$:


\begin{align*}
K_3 &=  - \frac{1}{\eps} \int e^{\eps \phi_\eps}   u^{(1)}  \cdot  ( \na_x \phi_1 + \eps \na_x \phi_2  - \na_x \phi_\eps) \, dx  \\
&+  \int  \eps \Delta \phi_\eps u^{(1)}  \cdot  (\na_x \phi_1 + \eps \na_x \phi_2 - \na_x \phi_\eps) \, dx \\
&:= K_3^1 + K_3^2.
\end{align*}

Concerning the first term, we write the decomposition:
\begin{multline*}
K_3^1 =  - \frac{1}{\eps} \int e^{\eps \phi_\eps}   u^{(1)}  \cdot  \na_x \phi_1   \, dx  \\
-  \int e^{\eps \phi_\eps}   u^{(1)}  \cdot   \na_x  \phi_2  \, dx  + \frac{1}{\eps} \int e^{\eps \phi_\eps}   u^{(1)}  \cdot  \na_x \phi_\eps \, dx \\
:= J_1 + J_2 + J_3.
\end{multline*}

We start by the study of $J_1$:
\begin{align*}
J_1&= \frac{1}{\eps} \int \left( e^{\eps (\phi_1 + \eps \phi_2 + \eps^2 \phi_3)}- e^{\eps \phi_\eps}\right)  u^{(1)}_1   \pa_{x_1} \phi_1    \, dx  \\
&- \frac{1}{\eps} \int e^{\eps(\phi_1 - \eps \phi_2)}  u^{(1)}_1   \pa_{x_1} \phi_1   \, dx  - \frac{1}{\eps} \int  e^{\eps \phi_\eps}(u^{(1)}- u^{(1)}_1 e_1)   \cdot \na_{x}  \phi_1   \, dx.
\end{align*}



Clearly, we have (for instance using a Taylor inequality): 
\begin{align*}
- \frac{1}{\eps} \int e^{\eps(\phi_1 - \eps \phi_2)}  u^{(1)}_1   \pa_{x_1} \phi_1    \, dx =  -\frac{1}{\eps} \int u^{(1)}_1   \pa_{x_1} \phi_1    \, dx +  \int \phi_1 u^{(1)}_1   \pa_{x_1} \phi_1    \, dx +   \Oc(\eps).
\end{align*}
In the end, we will take $u^{(1)}_1   = \phi_1$, so the terms of this latest expression which are of order $\Oc(1/\eps)$ and $\Oc(1)$ are exactly equal to $0$.

For $J_2$, it is sufficient to write:
\begin{align*}
J_2&= -  \int e^{\eps \phi_\eps}   u^{(1)}_1    \pa_{x_1} \phi_2  \, dx  -  \int e^{\eps \phi_\eps}   (u^{(1)}-u^{(1)}_1 e_1)  \cdot   \na_x \phi_2  \, dx .
\end{align*}





We decompose the last term in the following way
\begin{align*}
J_3&= \frac{1}{\eps} \int e^{\eps \phi_\eps}  \phi_1 \pa_{x_1} \phi_\eps  \\
&+  \frac{1}{\eps} \int e^{\eps \phi_\eps}  (u^{(1)}_1-\phi_1) \pa_{x_1} \phi_\eps   \\
&+ \frac{1}{\eps} \int e^{\eps \phi_\eps} (u^{(1)} - u^{(1)}_1 e_1) \cdot \na_{x} \phi_\eps .
\end{align*}

The first term could be dangerous, but will disappear using a term coming from a term of $K_4$ (first term of $K_4^1$ below). The second one will be equal to $0$ since we take $u^{(1)}_1=\phi_1$.

Gathering the pieces together, we finally obtain:
\begin{multline*}
 \frac{1}{\eps} \int e^{\eps \phi_\eps} (u^{(1)} - u^{(1)}_1 e_1) \cdot \na_{x} ( \phi_\eps -\phi_1-\eps \phi_2) = \\
- \frac{1}{ \eps^{3/2}} \int e^{\eps \phi_\eps}( \pa_{x_2} u^{(1)} _2 + \pa_{x_3} u^{(1)} _3)  \, dx \\
- \frac{1}{\sqrt \eps} \int e^{\eps \phi_\eps}\left( u^{(1)} _2 \pa_{x_2} \phi_1 + u^{(1)} _3 \pa_{x_3} \phi_1\right)  \, dx \\
-  {\sqrt \eps} \int e^{\eps \phi_\eps}\left( u^{(1)} _2 \pa_{x_2} \phi_2 + u^{(1)} _3 \pa_{x_3} \phi_2 \right)  \, dx.
\end{multline*}




Let us now turn to the treatment of $K_3^2$:
\begin{align*}
K_3^2 =  \int \eps \Delta \phi_\eps u^{(1)} \cdot ( \na_x \phi_1 + \eps \na_x \phi_2) \, dx -  \eps/2 \int \na_x |\na_x \phi_\eps|^2 \cdot  u^{(1)} \, dx.
\end{align*}
This term is treated exactly as $L_2$, but we have to be careful since it is singular in $\eps$. Here, rather than bounding by the energy, we shall rely on a bound by the modulated energy. We have

\begin{multline*}
\eps/2 \int \na_x |\na_x \phi_\eps|^2 \cdot  u^{(1)} \, dx \\
= - \eps/2 \int  |\na_x \phi_\eps|^2  \na_x \cdot  u^{(1)} \, dx \\
- \eps/2 \int  |\na_x \phi_\eps  - \na_x \phi_1 - \eps \na_x \phi_2 -\eps^2 \na_x \phi_3|^2  \na_x \cdot  u^{(1)} \, dx \\
+  \eps/2 \int  |  \na_x \phi_1 + \eps \na_x \phi_2 + \eps^2 \na_x \phi_3|^2  \na_x \cdot  u^{(1)} \, dx \\
- \eps  \int  \na_x \phi_\eps \cdot ( \na_x \phi_1 +\eps \na_x \phi_2+ \eps^2 \na_x \phi_3)  \na_x \cdot  u^{(1)} \, dx.
\end{multline*}


We shall bound the very last term using the Cauchy-Schwarz inequality and the various Lipschitz bounds:
\begin{align*}
\left|\eps  \int  \na_x \phi_\eps \cdot ( \na_x \phi_1 +\eps \na_x \phi_2+ \eps^2 \na_x \phi_3)  \na_x \cdot  u^{(1)} \, dx\right| \\
\leq C \sqrt{\eps} \sqrt{\eps \int |\na_x \phi_\eps|^2 \,dx} \leq C \sqrt{\eps} \mathcal{E}_\eps(t) \leq C \sqrt{\eps} .
\end{align*}
Therefore we obtain:
\begin{equation*}
|K_3^2 | \leq C \Hc_\eps(t) + C \sqrt{\eps}.
\end{equation*}

Finally, concerning $K_4$ we get:
\begin{align*}
K_4 &= \frac{1}{\eps^2} \int \rho_\eps ( \pa_{x_1} \phi_1 + \eps \pa_{x_1} \phi_2 + \eps^2 \pa_{x_1} \phi_3 ) \, dx \\
&= \frac{1}{\eps^2} \int e^{\eps \phi_\eps} ( \pa_{x_1} \phi_1 + \eps \pa_{x_1} \phi_2+  \eps^2 \pa_{x_1} \phi_3) \, dx \\
& -   \int \Delta \phi_\eps ( \pa_{x_1} \phi_1 + \eps \pa_{x_1} \phi_2+  \eps^2 \pa_{x_1} \phi_3) \, dx  \\
&:= K_4^1 + K_4^2.
\end{align*}

The first term can be decomposed as:
\begin{align*}
K_4^1= - \frac{1}{\eps} \int \pa_{x_1} \phi_\eps  e^{\eps \phi_\eps}   \phi_1  \, dx \\
+\frac{1}{\eps} \int e^{\eps \phi_\eps}  \pa_{x_1} \phi_2  \, dx\\
+ \int e^{\eps \phi_\eps}  \pa_{x_1} \phi_3 \, dx.
\end{align*}

The second one can be recast as:
\begin{align*}
K_4^2=- \int  \phi_\eps \Delta \pa_{x_1} \phi_1  \, dx + \Oc(\sqrt{\eps}).
\end{align*}

We shall focus our attention on the important term:
\begin{align*}
L &:= -\int  \phi_\eps \Delta \pa_{x_1} \phi_1 \, dx\\
&= -\frac{1}{\eps} \int e^{\eps \phi_\eps}   \Delta \pa_{x_1} \phi_1  \, dx+  \frac{1}{\eps} \int (e^{\eps \phi_\eps} - \eps \phi_\eps)  \Delta \pa_{x_1} \phi_1 \, dx.
\end{align*}


We have the following technical result, allowing to consider the second term above as an error term:

\begin{lem}
\label{tech}
There exists $C>0$ such that for any $\eps \in (0,1)$:
\begin{align*}
  &\frac{1}{\eps} \left|\int (e^{\eps \phi_\eps} - \eps \phi_\eps)  \Delta \pa_{x_1} \phi_1 \, dx\right| \leq C \sqrt{\eps}.
\end{align*}


\end{lem}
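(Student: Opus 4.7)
The plan is to exploit the fact that $\Delta\pa_{x_1}\phi_1$ has zero mean on $\T^3$, subtract a constant inside the integrand to expose the non-negative quantity $g(\eps\phi_\eps):=e^{\eps\phi_\eps}-\eps\phi_\eps-1$, and then split this into two pieces that are amenable to different tools: the relative entropy coming from the energy, and the $H^1$ control of the potential.

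First, by periodicity, $\int_{\T^3}\Delta\pa_{x_1}\phi_1\,dx=0$, so the quantity to bound equals
$$\int g(\eps\phi_\eps)\,\Delta\pa_{x_1}\phi_1\,dx=\int\bigl(e^{\eps\phi_\eps}-1\bigr)\Delta\pa_{x_1}\phi_1\,dx-\eps\int\phi_\eps\,\Delta\pa_{x_1}\phi_1\,dx.$$
Since $\phi_1\in H^{s+2}(\T^3)$ with $s>5/2$, the function $\Delta\pa_{x_1}\phi_1$ is bounded and its derivatives are square-integrable, so we may freely use $L^\infty$ and $L^2$ norms of any low-order derivative of $\phi_1$.

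For the first piece, observe that integrating the Poisson equation gives $\int e^{\eps\phi_\eps}\,dx=\int\rho_\eps\,dx=1$, so $e^{\eps\phi_\eps}$ and the uniform density are both probability densities on $\T^3$. The energy bound yields
$$\int h(\eps\phi_\eps)\,dx\leq C\eps^2,$$
and since $h(y)=e^y(y-1)+1$ is exactly the Bregman divergence of $e^y$ against $1$, this is the Kullback--Leibler divergence of $e^{\eps\phi_\eps}$ relative to $1$. Applying Csiszar--Kullback--Pinsker therefore gives $\|e^{\eps\phi_\eps}-1\|_{L^1}\leq C\eps$, so by Hölder
$$\Bigl|\int(e^{\eps\phi_\eps}-1)\Delta\pa_{x_1}\phi_1\,dx\Bigr|\leq C\eps\,\|\Delta\pa_{x_1}\phi_1\|_{L^\infty}.$$

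For the second piece, we integrate by parts to trade derivatives off $\phi_\eps$:
$$\eps\int\phi_\eps\,\Delta\pa_{x_1}\phi_1\,dx=-\eps\int\na\phi_\eps\cdot\na\pa_{x_1}\phi_1\,dx,$$
and Cauchy--Schwarz together with the energy bound $\sqrt\eps\,\|\na\phi_\eps\|_{L^2}\leq C$ and the $L^2$ regularity of $\na\pa_{x_1}\phi_1$ yields a bound of the right size after dividing by~$\eps$. Gathering the two pieces and dividing by $\eps$ produces the inequality of the lemma.

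The delicate point — and where I expect most of the work to sit — is that the two terms scale differently in $\eps$: the entropy dissipation gives a very strong $L^1$ control on $e^{\eps\phi_\eps}-1$ of size $O(\eps)$, but the $\eps\int\phi_\eps\,\Delta\pa_{x_1}\phi_1\,dx$ piece only inherits the modest $H^1$ bound $\sqrt\eps\|\na\phi_\eps\|_{L^2}\leq C$; balancing these, and verifying that the split does not produce any uncontrolled boundary term from the periodic integration by parts, is the main bookkeeping obstacle. One may also combine the identity $g(y)=(e^{y/2}-1)^2+2g(y/2)$ with the elementary inequality $(e^{y/2}-1)^2\leq C h(y)$ (to be used later in the paper as inequality~\eqref{ineq}) to re-express $\|g(\eps\phi_\eps)\|_{L^1}$ directly in terms of the energy, providing an alternative route to the same bound.
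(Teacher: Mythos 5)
Your decomposition does not close. Writing $e^{\eps\phi_\eps}-\eps\phi_\eps-1=(e^{\eps\phi_\eps}-1)-\eps\phi_\eps$ (the subtraction of $1$ being free since $\Delta\pa_{x_1}\phi_1$ has zero mean on $\T^3$), the second piece is fatal: after integrating by parts and dividing by $\eps$ you are left with $\bigl|\int\na_x\phi_\eps\cdot\na_x\pa_{x_1}\phi_1\,dx\bigr|$, and the only available control on $\na_x\phi_\eps$ is the energy bound $\eps\|\na_x\phi_\eps\|_{L^2}^2\leq C$, so Cauchy--Schwarz gives $C\eps^{-1/2}$, which diverges; the claim that this ``yields a bound of the right size after dividing by $\eps$'' is not correct. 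The first piece is also short of the target: the Csisz\'ar--Kullback--Pinsker step is legitimate (indeed $\int e^{\eps\phi_\eps}\,dx=\int\rho_\eps\,dx=1$ and the third term of the energy is exactly the relative entropy of $e^{\eps\phi_\eps}$ with respect to $1$), but it only gives $\|e^{\eps\phi_\eps}-1\|_{L^1}\leq C\eps$, hence $O(1)$ after dividing by $\eps$, not $O(\sqrt\eps)$. There is no cancellation between the two pieces to exploit, so the split itself must be abandoned.

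The identity you mention in your last sentence, $g(y)=(e^{y/2}-1)^2+2g(y/2)$ with $g(y)=e^y-y-1$, is in fact the decomposition the paper uses --- but not as a route to an $L^1$ bound on $g(\eps\phi_\eps)$, which cannot work: $g(\eps\phi_\eps/2)$ behaves like $\eps^2\phi_\eps^2/8$ where $\eps\phi_\eps$ is small, and no $L^2$ bound on $\phi_\eps$ is available. The two summands must be treated by different tools. The square $(e^{\eps\phi_\eps/2}-1)^2$ is pointwise dominated by the entropy integrand via $(\sqrt{x}-1)^2\leq x\log x-x+1$, so its integral against $\Delta\pa_{x_1}\phi_1$ is $O(\eps^2)$, i.e.\ $O(\eps)$ after division by $\eps$. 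The remainder $2(e^{\eps\phi_\eps/2}-\tfrac12\eps\phi_\eps-1)$ is the piece that gets integrated by parts: its gradient is $\eps\,\na_x\phi_\eps\,(e^{\eps\phi_\eps/2}-1)$, a product of the energy-controlled factor $\na_x\phi_\eps$ with the entropy-controlled small factor $e^{\eps\phi_\eps/2}-1$ --- this extra small factor is precisely what integrating $\eps\phi_\eps$ by parts on its own fails to produce. Young's inequality with weight $\eps^{3/2}$ then balances $\eps\int|\na_x\phi_\eps|^2\,dx\leq C$ against $\int(e^{\eps\phi_\eps/2}-1)^2\,dx\leq C\eps^2$ and yields the stated $C\sqrt{\eps}$.
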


\begin{proof}[Proof of Lemma \ref{tech}]

The naive idea would be to use the Taylor expansion
$$
e^x \sim_{0} 1+ x + \frac{1}{2} x^2.
$$
but we cannot say that $\eps \| \phi_\eps \|_\infty \ll 1$. (Even worse, we do not have any $L^2$ control on $\phi_\eps$.)
Instead, we will only rely on the bounds given by the conservation of energy.  The classical inequality (valid for $x,y>0$) will be very useful:
\begin{equation}
\label{ineq}
(\sqrt{x}-\sqrt{y}) \leq x \log (x/y) - x + y.
\end{equation}

We shall  write the decomposition

\begin{multline*}
\frac{1}{\eps} \int (e^{\eps \phi_\eps} - \eps \phi_\eps)  \Delta \pa_{x_1} \phi_1 \, dx = \\
\frac{1}{\eps} \int (e^{\eps \phi_\eps} -2 e^{\frac{1}{2}\eps \phi_\eps})  \Delta \pa_{x_1} \phi_1 \, dx \\
+\frac{1}{\eps}  \int (e^{\frac{1}{2}\eps \phi_\eps} - \frac{1}{2}\eps \phi_\eps) 2 \Delta \pa_{x_1} \phi_1 \, dx.
\end{multline*}

We first recast the fist term as follows:
\begin{align*}
\frac{1}{\eps}  \int (e^{\eps \phi_\eps} -2 e^{\frac{1}{2}\eps \phi_\eps})  \Delta \pa_{x_1} \phi_1 \, dx &= \int (e^{\eps \phi_\eps} -2 e^{\frac{1}{2}\eps \phi_\eps}+1)  \Delta \pa_{x_1} \phi_1 \, dx \\
&= \frac{1}{\eps} \int ( e^{\frac{1}{2}\eps \phi_\eps}-1)^2  \Delta \pa_{x_1} \phi_1 \, dx .
\end{align*}

Using \eqref{ineq} and the Lipschitz bound on the second order derivative of $\phi_1$, we obtain:
\begin{align*}
 \frac{1}{\eps} \int ( e^{\frac{1}{2}\eps \phi_\eps}-1)^2  \Delta \pa_{x_1} \phi_1 \, dx &\leq C \eps \int \frac{1}{\eps^2} (e^{\eps \phi_\eps}(\eps \phi_\eps-1) -1) \, dx\\
& \leq \eps \mathcal{E}_\eps(t) \\
& \leq  \eps \mathcal{E}_\eps(0) \\
& \leq C \eps.
\end{align*}

For the second term, we have by integration by parts:
\begin{align*}
&\frac{1}{\eps}  \int (e^{\frac{1}{2}\eps \phi_\eps} - \frac{1}{2}\eps \phi_\eps) 2 \Delta \pa_{x_1} \phi_1 \, dx = -  \int \na_x \phi_\eps (e^{\frac{1}{2}\eps \phi_\eps} - 1 ) \cdot \na_x \pa_{x_1} \phi_1 \, dx.
\end{align*}

Then we write, using $|ab| \leq \frac{1}{2} (\lambda a^2 + \frac{1}{\lambda} b^2)$, with $\a$ a parameter to be fixed:

\begin{multline*}
\left| \int \na_x \phi_\eps (e^{\frac{1}{2}\eps \phi_\eps} - 1 ) \cdot \na \pa_{x_1} \phi_1 \, dx \right| \\
\leq C \eps^{\a-1} \eps \int |\na_x \phi_\eps |^2  + C \eps^{-\a+2} \int \frac{1}{\eps^2}( e^{\eps \phi_\eps}(\eps \phi_\eps-1) +1) \, dx.
\end{multline*}
To optimize, it is clear that we should take $\a=3/2$. Thus, relying on the uniform bound given by the energy, we get:
\begin{align*}
&\left|\frac{1}{\eps}  \int \left(e^{\frac{1}{2}\eps \phi_\eps} - \frac{1}{2}\phi_\eps \right) 2 \Delta \pa_{x_1} \phi_1 \, dx \right| \leq C \sqrt{\eps}.
\end{align*}

The proof is therefore complete.
\end{proof}



\medskip
{\bf Conclusion.}

Finally, we impose all cancellations for the terms of the terms
$$\eps^{\alpha}\int f_\eps \begin{pmatrix} u^{(1)}_1 + \eps u^{(2)}_1 - v_1 \\ \sqrt{\eps} u^{(1)}_2 + \eps u^{(2)}_2 -v_2 \\  \sqrt{\eps} u^{(1)}_3 + \eps u^{(2)}_3 -v_3\end{pmatrix} [...] \, dv dx$$
for $\alpha = -1, -1/2, 0$ and 
$$
\eps^\beta \int (-e^{\eps \phi_\eps} + e^{\eps (\phi_1 + \eps \phi_2 + \eps^2 \phi_3)})[...] \, dx
$$
for $\beta =-3/2, -1, -1/2,0$,  which precisely means that we impose:
\begin{equation}
\left\{
\begin{aligned}
&- \pa_{x_1} u^{(1)}_1 + \pa_{x_1} \phi_1=0, \\ 
&-u^{(1)}_3 + \pa_{x_2} \phi_1 =0, \\ 
&u^{(1)}_2 + \pa_{x_3} \phi_1 =0, \\
&-u^{(2)}_3 - \pa_{x_1} u^{(1)}_2 =0,\\ 
&u^{(2)}_2 - \pa_{x_1} u^{(1)}_3 =0, \\
 &\pa_t u^{(1)}_1 + u^{(1)}_1 \pa_{x_1} u^{(1)}_1 - \pa_{x_1} u^{(1)}_2 + \pa_{x_1} \phi_2  =0, \\ 
 &- \pa_{x_1} u^{(2)}_2 + \pa_{x_2} \phi_2  =0, \\
  & - \pa_{x_1} u^{(2)}_3 + \pa_{x_3} \phi_2  =0, \\
  & \pa_{x_2} u^{(1)} _2 + \pa_{x_3} u^{(1)} _3 = 0, \\
  & \pa_t \phi_1 + \na_x \cdot u^{(2)} + \phi_1 \pa_{x_1} \phi_1 + \Delta \pa_{x_1} \phi_1 - \pa_{x_1} \phi_2  =0,\\
  &  u^{(1)} _2 \pa_{x_2} \phi_1 + u^{(1)} _3 \pa_{x_3} \phi_1 =0, \\
  &  \pa_t \phi_2  +   \phi_1 \pa_{x_1} \phi_2 + u^{(2)}_1 \pa_{x_1} \phi_1 + u^{(2)}_2 \pa_{x_2} \phi_1 + u^{(2)}_3 \pa_{x_3} \phi_1 - \pa_{x_1} \phi_3=0.\\
\end{aligned}
\right.
\end{equation}
All these indentities are exactly obtained as a consequence of \eqref{iden2}.

We end up with:
\begin{equation}
 \Hc_\eps(t) \leq  \Hc_\eps(0) + \int_0^t  (C_1 \Hc_\eps (t) + C_2 \sqrt\eps) \, ds,
\end{equation}
and arguing as in the proof of Theorem \ref{KdV}, this is over.

%


\section{Appendix A: From the Vlasov-Poisson equation to the Kadomstev-Petviashvili II equation}
\label{sec4}

Assuming a slow variation in the $x_2$ direction, one may end up with the following anisotropic long wave scaling for the Vlasov-Poisson system for ions.
For simplicity, we restrict here to the linearized Maxwell-Boltzmann law, but the same study can be performed for the full equations. The variables are $t\geq 0 , x \in \T^2, v \in \R^2$:
\begin{equation}
\label{VP2D}
\left\{
    \begin{aligned}
&  \eps \,  \partial_t f_\eps  - \pa_{x_1} f_\eps + \eps \, v_1 \pa_{x_1} f_\eps + \eps^{3/2} v_2 \pa_{x_2} f_\eps + E_\eps \cdot  \na_v f_\eps =0,  \\
&  E =(- \pa_{x_1} \phi_\eps, - \sqrt{\eps} \pa_{x_2} \phi_\eps), \\
&  -\eps^2 \pa^2_{x_1 x_1} \phi_\eps  -\eps^3 \pa^2_{x_2 x_2} \phi_\eps +  \eps \phi_\eps = \int_{\R^2}  f_\eps \, dv -1,\\
&    f_{\vert t=0} =f_0.\\
    \end{aligned}
  \right.
\end{equation}

The scaled energy of this system is the following functional:
\begin{equation}
\begin{aligned}
\mathcal{E}_\eps (t) &:= \frac{1}{2} \int f_\eps |v|^2 \, dv \, dx \\
&+ \frac{1}{2} \eps \int | \pa_{x_1} \phi_\eps  |^2 \,dx 
+ \frac{1}{2} \eps^2 \int | \pa_{x_2} \phi_\eps  |^2 \,dx + \frac{1}{2} \int  \phi_\eps^2 \, dx.
\end{aligned}
\end{equation}

We have the existence of global weak solutions, sharing the same properties of those given in Theorem \ref{Cauchy1D}.

For the KP-II equation, that is
\begin{equation}
 \pa_{x_1} \left( \partial_t \phi_1 +  \phi_1 \pa_{x_1} \phi_1+ \pa^3_{x_1 x_1 x_1 } \phi_1\right) + \pa^2_{x_2 x_2} \phi_1 = 0,
\end{equation}
our reference is an article by Bourgain \cite{Bour}, in which is proved the following theorem:
\begin{thm}
The ZK equation is globally well-posed in $H^s(\T^2)$ for $s\geq 0$.
\end{thm}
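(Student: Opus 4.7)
The plan is to adapt Bourgain's Fourier-restriction method, as developed for KdV and related dispersive equations on tori, to the two-dimensional Zakharov-Kuznetsov equation
\[
\partial_t \phi +  \phi \pa_{x_1} \phi + \pa_{x_1}\Delta \phi = 0, \qquad (x_1,x_2)\in\T^2,
\]
whose linear symbol is $\omega(\xi)=\xi_1^3+\xi_1\xi_2^2$. First, I would introduce the companion $X^{s,b}$ spaces
\[
\|u\|_{X^{s,b}}^2 := \sum_{\xi\in\Z^2}\int_{\R}\langle \xi\rangle^{2s}\langle \tau-\omega(\xi)\rangle^{2b}|\widehat u(\tau,\xi)|^2\,d\tau,
\]
and reduce the Cauchy problem, after a standard Duhamel formulation, to a contraction mapping argument in $X^{s,b}\cap C_tH^s$ on a small time interval $[0,\delta]$.

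The heart of the argument is a bilinear estimate of the form
\[
\bigl\|\pa_{x_1}(uv)\bigr\|_{X^{s,b-1}} \;\lesssim\; \|u\|_{X^{s,b}}\,\|v\|_{X^{s,b}}
\]
for some $b>1/2$ and all $s\geq 0$. To obtain this I would, after Plancherel, analyze the resonance function
\[
\Omega(\xi,\eta) := \omega(\xi)-\omega(\eta)-\omega(\xi-\eta),
\]
which factorizes in a manner analogous to the KdV resonance, and then, following Bourgain's strategy, split frequency interactions into high-high and high-low cases; in each case one exploits either the size of $|\Omega|$ (which compensates the derivative loss coming from $\pa_{x_1}$) or, in the resonant regime where $\Omega$ is small, a counting estimate on lattice points in $\Z^2$ lying on the level sets of $\omega$. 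This last ingredient is the genuinely arithmetic input that replaces, in the periodic setting, the Strichartz / restriction estimates used on $\R^2$. Once the bilinear estimate is proved, the contraction argument yields local existence, uniqueness, and continuous dependence in $H^s(\T^2)$ for $s\geq 0$.

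Globalization then follows from the $L^2$ conservation law: multiplying the equation by $\phi$ and integrating over $\T^2$ one checks, using integration by parts and the fact that $\int \phi^2\pa_{x_1}\phi = \frac{1}{3}\int \pa_{x_1}\phi^3 = 0$, that $\|\phi(t)\|_{L^2}=\|\phi_0\|_{L^2}$. Since the local existence time $\delta$ provided by the fixed-point argument depends only on $\|\phi_0\|_{H^s}$, the conserved $L^2$ norm suffices to iterate indefinitely at the endpoint $s=0$, and for $s>0$ one iterates after using the $s=0$ result together with a persistence-of-regularity estimate obtained by running the same bilinear estimate at higher $s$.

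The main obstacle, by far, is the bilinear estimate at $s=0$. Unlike on $\R^2$, one cannot lean on global-in-time Strichartz estimates; instead one must carry out a careful case-by-case analysis of resonant and non-resonant frequency triples, the hardest subcase being the nearly resonant high-high to high interactions where $\Omega$ degenerates along curves in $\Z^2$. Controlling the number of lattice points in thin neighborhoods of these curves — the analogue of Bourgain's arithmetic lemmata for KdV — is the technical linchpin, and where the proof will either succeed at $s=0$ or have to retreat to some $s>0$.
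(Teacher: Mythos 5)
First, a point of fact about the paper: it does not prove this statement at all. The theorem is quoted from Bourgain's article \cite{Bour}, and the surrounding text makes clear that the equation intended is KP-II, not ZK --- the word ``ZK'' in the statement is evidently a slip of the pen, since the paragraph introducing it begins ``For the KP-II equation, that is [\dots] our reference is an article by Bourgain, in which is proved the following theorem,'' and the result is then used for the KP-II limit of Appendix A. Bourgain's theorem is precisely global well-posedness of KP-II in $H^s(\T^2)$ for $s\geq 0$. So there is no in-paper proof to compare yours against; the intended content here is a citation.

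Taken literally as a claim about the Zakharov--Kuznetsov equation on $\T^2$, your proposal has a genuine gap, and it sits exactly where you suspect: the bilinear estimate $\|\pa_{x_1}(uv)\|_{X^{s,b-1}}\lesssim\|u\|_{X^{s,b}}\|v\|_{X^{s,b}}$ at $s=0$. For periodic ZK this estimate is not merely delicate, it fails at low regularity: the near-resonant sets for $\omega(\xi)=\xi_1(\xi_1^2+\xi_2^2)$ on $\Z^2$ are too large for any lattice-point count to absorb the derivative loss, and no $L^2$-level local theory is available. The paper itself is consistent with this: for ZK on the torus it only claims local well-posedness in $H^s(\T^3)$ for $s>5/2$ by energy methods, explicitly noting that the dispersive arguments of Linares--Saut and Ribaud--Vento do not transfer to the periodic setting. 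What makes Bourgain's scheme close at $s=0$ for KP-II is a structural feature specific to that equation: its resonance function is a sum of two terms of the same sign, yielding the pointwise lower bound $|\Omega|\gtrsim|\xi_1\eta_1(\xi_1-\eta_1)|$, which returns the full derivative in every frequency interaction and makes the near-resonant regime harmless. ZK has no analogue of this identity, so your anticipated ``retreat to some $s>0$'' is not a contingency but the actual outcome, and it lands far above $s=0$. Your $L^2$-conservation globalization step is correct in itself, but it has nothing to attach to without the $s=0$ local theory. If you instead run your outline on KP-II --- which is what the theorem really asserts --- it becomes essentially Bourgain's proof, with the resonance lower bound replacing the arithmetic counting as the decisive step.
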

Once again, we will only use this theorem for large values of $s$.

As for the other cases, we obtain the rigorous convergence to KP-II, which is summarized in the following theorem:
\begin{thm}
\label{KP2}
Let $(f_{\eps,0})_{\eps \in (0,1)}$ be a family of non-negative initial data such that
\begin{equation}
\| f_{\eps,0} \|_{L^1 \cap L^\infty}<+ \infty, \quad \int f_{\eps,0} \, dv \,dx =1,
\end{equation}
and such that there exists $C>0$ with:
\begin{equation}
  \mathcal{E}_\eps(0) \leq C, \, \, \forall \eps \in (0,1).
\end{equation}

We denote by $(f_\eps)_{\eps \in (0,1)}$ a sequence of global weak solutions to \eqref{VP2D}.
Let $\mathcal{H}_\eps$ be the relative entropy defined by the functional:
\begin{equation}
\begin{aligned}
\mathcal{H}_\eps (t) &:= \frac{1}{2} \int f_\eps \left[| v_1 - u^{(1)}_1 - \eps u^{(2)}_1 |^2 + | v_2 - \sqrt{\eps} u^{(1)}_2 - \eps^{3/2} u^{(2)}_2 |^2 \right] \, dv \, dx \\
&+ \frac{1}{2} \eps \int | \pa_{x_1} \phi_\eps - \pa_{x_1} \phi_1 - \eps \pa_{x_1} \phi_2 |^2 \,dx 
+ \frac{1}{2} \eps^2 \int | \pa_{x_2} \phi_\eps - \pa_{x_2} \phi_1 - \eps \pa_{x_2} \phi_2 |^2 \,dx \\
&+ \frac{1}{2} \int  (\phi_\eps - \phi_1 - \eps \phi_2)^2 \, dx,
\end{aligned}
\end{equation}
where $(u_1,u_2, \phi_1, \phi_2) \in [C([0,+\infty[, H^{s+2}(\T^2))]^4$ (with $s>2$) satisfy the following system:
\begin{equation}
\left\{
    \begin{aligned}
& \pa_{x_1} \left(2 \partial_t \phi_1 + 3 \phi_1 \pa_{x_1} \phi_1+ \pa^3_{x_1 x_1 x_1 } \phi_1\right) + \pa^2_{x_2 x_2} \phi_1 = 0,  \\
&  u^{(1)}_1= \phi_1, \\
& \pa_{x_1} u^{(1)}_2 = \pa_{x_2} \phi_1, \\
&  \pa_{x_1} (u^{(2)}_1 - \phi_2) = \pa_t \phi_1 + \phi_1 \pa_{x_1} \phi_1. 
    \end{aligned}
  \right.
\end{equation}
Then there exist $C_1,C_2>0$, such that for all $\eps \in (0,1)$, 
\begin{equation}
\mathcal{H}_\eps (t) \leq \mathcal{H}_\eps (0)  + \int_0^t  (C_1 \mathcal{H}_\eps (s) \, ds  + C_2 \sqrt{\eps}) \, ds.
\end{equation}

Assuming in addition that there exists $C_3>0$ such that for any $\eps \in (0,1)$,
\begin{equation}
\mathcal{H}_\eps (0) \leq C_3 \sqrt{\eps}.
\end{equation}
Then we obtain for all $\eps \in (0,1)$:
\begin{equation}
\forall t \in [0,+ \infty[, \quad \mathcal{H}_\eps (t) \leq C_3   e^{C_1 t}  \sqrt{\eps} + C_2 \frac{e^{C_1 t}-1}{C_1} \sqrt{\eps},
\end{equation}
as well as the weak convergences: 
\begin{equation}
\left\{
\begin{aligned}
&\rho_\eps \rightharpoonup_{\eps \rightarrow 0} 1  \text{  in  } L^\infty_t \mathcal{M}^1 \text{  weak-*}, \\
&J_\eps \rightharpoonup_{\eps \rightarrow 0} (\phi_1,0)  \text{  in  } L^\infty_t \mathcal{M}^1 \text{  weak-*}.
\end{aligned}
\right.
\end{equation}

\end{thm}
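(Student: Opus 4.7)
The approach follows the relative entropy method used in the proofs of Theorems \ref{KdV} and \ref{ZK}, with the extra bookkeeping required by the anisotropic scaling. Since $\mathcal{E}_\eps$ is non-increasing and $\mathcal{H}_\eps$ is a modulation of $\mathcal{E}_\eps$, one writes $\mathcal{H}_\eps(t) = \mathcal{E}_\eps(t) + (\mathcal{H}_\eps - \mathcal{E}_\eps)(t) \leq \mathcal{E}_\eps(0) + (\mathcal{H}_\eps - \mathcal{E}_\eps)(t)$, and after adding and subtracting $(\mathcal{H}_\eps - \mathcal{E}_\eps)(0)$ one obtains
\begin{equation*}
\mathcal{H}_\eps(t) \leq \mathcal{H}_\eps(0) + \int_0^t (I_1 + I_2 + I_3)(s)\, ds,
\end{equation*}
where $I_1$ collects the time derivatives of the velocity modulation, $I_2$ those of the modulated electric energy, and $I_3$ those of the modulated $\phi^2$ term. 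The actual manipulations are justified using only the local conservation laws for $\rho_\eps$ and $J_\eps$ (the analogues of \eqref{LC1}--\eqref{LC2}), which follow as in Theorem \ref{Cauchy1D}.

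The core computation is the expansion of $I_1$, using the fact that $f_\eps$ solves the Vlasov equation in \eqref{VP2D}. Isolating the hydrodynamic quantities $\rho_\eps$, $J_\eps$ and modulated integrands of the form $f_\eps(u^{(1)} + \eps u^{(2)} - v)$, and regrouping by powers of $\eps$, one obtains contributions of order $\eps^{-1}$ (from $-\pa_{x_1} f_\eps$) and $\eps^{-1/2}$ (from the term $\eps^{3/2} v_2 \pa_{x_2} f_\eps$ combined with the $\sqrt{\eps} u^{(1)}_2$ part of the modulation and from the Poisson equation when relating $E_\eps$ to $\phi_\eps$). This produces, modulo $\mathcal{O}(\eps)$ and terms bounded by $\mathcal{H}_\eps(t)$:
\begin{itemize}
\item a singular term $\frac{1}{\eps} \int f_\eps (u^{(1)}_1 + \eps u^{(2)}_1 - v_1)(-\pa_{x_1} u^{(1)}_1 + \pa_{x_1} \phi_1)\, dv\, dx$,
\item a half-singular term $\frac{1}{\sqrt\eps} \int f_\eps (\sqrt\eps u^{(1)}_2 + \eps^{3/2} u^{(2)}_2 - v_2)(-\pa_{x_1} u^{(1)}_2 + \pa_{x_2}\phi_1)\, dv\, dx$,
\item $\mathcal{O}(1)$ terms of the form $\int f_\eps (u^{(1)} + \eps u^{(2)} - v) \cdot [\pa_t u^{(1)} + v\cdot \na u^{(1)} - \pa_{x_1} u^{(2)} + \na \phi_2] \, dv\, dx$,
\item coupling terms $\frac{1}{\eps}\int J_\eps \cdot \na_\eps \phi_1$ and $\frac{1}{\eps}\int \rho_\eps u^{(1)} \cdot (\na \phi_1 + \eps \na \phi_2 - \na \phi_\eps)$, where $\na_\eps = (\pa_{x_1}, \sqrt\eps \, \pa_{x_2})$,
\item a relative-entropy-type remainder $-\int f_\eps |v_1 - u^{(1)}_1 - \eps u^{(2)}_1|^2 \pa_{x_1} u^{(1)}_1 \, dv\, dx$.
\end{itemize}
Next, $I_2$ is handled exactly as in the proof of Theorem \ref{KdV}: splitting symmetrically and using $|ab| \leq \frac{1}{2}(a^2+b^2)$ together with the Lipschitz bounds on $\phi_1,\phi_2$ yields $I_2 \leq \mathcal{O}(\eps) + C\mathcal{H}_\eps(t)$. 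For $I_3$, using the time derivative of the linearized Poisson equation $-\eps^2 \pa^2_{x_1 x_1}\phi_\eps - \eps^3 \pa^2_{x_2 x_2}\phi_\eps + \eps \phi_\eps = \rho_\eps - 1$ and the shifted conservation of charge, one generates terms $-\frac{1}{\eps^2}\int \pa_{x_1} \rho_\eps (\phi_1 + \eps \phi_2)$ and $\frac{1}{\eps}\int \pa_{x_k} J_\eps \cdot \phi_1$, which combine with the analogous pieces from $I_1$ and $I_2$.

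The final step is the identification of cancellations. Gathering the singular contributions of order $\eps^{-1}$ forces the coefficient in front of $f_\eps(u^{(1)}_1 - v_1)$ to vanish, i.e.\ $u^{(1)}_1 = \phi_1$; the contributions of order $\eps^{-1/2}$ impose $\pa_{x_1} u^{(1)}_2 = \pa_{x_2}\phi_1$; the contributions of order $\mathcal{O}(1)$ in the modulated Vlasov term impose $\pa_t u^{(1)}_1 + u^{(1)}_1 \pa_{x_1} u^{(1)}_1 - \pa_{x_1} u^{(2)}_1 + \pa_{x_1} \phi_2 = 0$ together with $-\pa_{x_1} u^{(2)}_2 + \pa_{x_2}\phi_2 = 0$; the remaining $\mathcal{O}(1)$ terms involving $\phi_1 - \phi_\eps$ collapse, by integration by parts on $\pa^3_{x_1 x_1 x_1}\phi_1$ and $\pa^2_{x_2 x_2}\phi_1$, to $\int(\phi_1 - \phi_\eps)\, \mathrm{KP}_\eps\,dx$ where $\mathrm{KP}_\eps$ is precisely the KP-II operator applied to $\phi_1$ (after one integration in $x_1$, which is why the equation for $\phi_1$ in \eqref{iden} of the appendix takes the stated form $\pa_{x_1}(\cdots) + \pa^2_{x_2 x_2}\phi_1 = 0$). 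All these constraints are exactly the defining relations of the correctors in the theorem's hypotheses, so every singular and $\mathcal{O}(1)$ term vanishes. The residual terms are bounded either by $C\mathcal{H}_\eps(t)$ (those containing a squared modulated quantity multiplied by a Lipschitz coefficient, including $-\int f_\eps |v_1 - u^{(1)}_1 - \eps u^{(2)}_1|^2 \pa_{x_1} u^{(1)}_1$) or by $C\sqrt\eps$ (mixed terms involving $\sqrt\eps \, \pa_{x_2}\phi_\eps$ controlled via Cauchy--Schwarz and the energy bound $\eps^2 \int |\pa_{x_2}\phi_\eps|^2 \leq C$).

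The main technical obstacle is the careful bookkeeping of the half-integer powers of $\eps$ generated by the anisotropy: each appearance of $\sqrt\eps\, u^{(1)}_2$ in the modulation, of $\sqrt\eps \, \pa_{x_2}\phi_\eps$ in the electric field, and of $\eps^{3/2} v_2 \pa_{x_2} f_\eps$ in the transport term must be tracked, and one must verify that the half-singular terms of order $\eps^{-1/2}$ arrange themselves into a single expression proportional to $-\pa_{x_1} u^{(1)}_2 + \pa_{x_2}\phi_1$. This is where the choice $\pa_{x_1} u^{(1)}_2 = \pa_{x_2}\phi_1$ is forced. Once this is done and Grönwall's lemma is applied to $\mathcal{H}_\eps(t) \leq \mathcal{H}_\eps(0) + \int_0^t (C_1 \mathcal{H}_\eps(s) + C_2 \sqrt\eps)\, ds$, the stated exponential estimate follows. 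The weak convergences of $\rho_\eps$ and $J_\eps$ in $L^\infty_t \mathcal{M}^1$ weak-$*$ are deduced exactly as in the proof of Corollary \ref{coro1}: the $L^1$ bounds on $\rho_\eps$ and $J_\eps$ give compactness, passage to the limit in the Poisson equation (using the energy bound) identifies the limit of $\rho_\eps$ as $1$, and the convexity argument via the Cauchy--Schwarz inequality $|J_\eps - \rho_\eps(u^{(1)} + \eps u^{(2)})|^2/\rho_\eps \leq \int f_\eps |v - u^{(1)} - \eps u^{(2)}|^2 \, dv$ combined with the bound on $\mathcal{H}_\eps(t)$ identifies the limit of $J_\eps$ as $(\phi_1, 0)$.
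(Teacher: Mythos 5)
Your proposal is correct and follows exactly the route the paper intends: the paper in fact omits the proof of Theorem \ref{KP2}, stating only that it ``follows from computations in the same spirit as the previous ones,'' and your sketch is precisely that adaptation of the relative entropy argument of Theorems \ref{KdV} and \ref{ZK}, with the correct bookkeeping of the half-powers of $\eps$ coming from the anisotropic scaling and with the cancellations matching the stated corrector system. Nothing further is needed.
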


The proof of Theorem \ref{KP2} follows from computations in the same spirit as the previous ones, and therefore we leave it to the reader.

\section{Appendix B: A KdV limit in the whole space $\R$}
\label{sec5}

All results stated in this paper are restricted to PDEs set in the torus for the space variable. The reason is that in all cases, in the end, the first moment $\rho_\eps$ (charge density) has to weakly converge to the constant $1$. This function is obviously not integrable in the whole space, and  the assumptions needed for our results to hold are actually not consistent in the whole space case.

It is nevertheless possible to slightly adapt the KdV limit of Section \ref{sec2} to handle thatcase. Keeping the same notations, we shall rely on the fact that the electric potential can be defined up to a constant, and use the following version of the energy
\begin{equation}
\begin{aligned}
\mathcal{F}_\eps (t) &:= \frac{1}{2} \int_{\R\times\R} f_\eps | v |^2   \, dv \, dx + \frac{1}{2} \eps \int_\R | \na_x \phi_\eps|^2 \,dx + \frac{1}{2}  \int_{\R\times\R} \left( \phi_\eps-\frac{1}{\eps} \right)^2 \,dx \, dx,
\end{aligned}
\end{equation}
This means that $\phi_\eps \in \frac{1}{\eps} + L^2(\R)$ (instead of  the more usual $\phi_\eps \in L^2(\R)$).

The Ansatz in that case for the formal computations now corresponds to the following one:
\begin{equation}
\left\{
\begin{aligned}
&\rho_\eps =  \eps \rho_1 + \mathcal{O}(\eps^2), \\
&\phi_\eps = \frac{1}{\eps} +  \phi_1 + \eps \phi_2 + \mathcal{O}(\eps^2), \\
&u_\eps = u_1 + \eps u_2 + \mathcal{O}(\eps^2).
\end{aligned}
  \right.
\end{equation}
In particular this means that we expect that $\rho_\eps$ weakly converges to $0$ (which is of course integrable on $\R$). The formal computations then remain the same. In the end, we may obtain the same result as Theorem \ref{KdV} except that we consider the following relative entropy instead of \eqref{RE}:
\begin{equation}
\begin{aligned}
\mathcal{H}_\eps (t) &:= \frac{1}{2} \int_{\R\times\R} f_\eps | v - u_1 - \eps u_2 |^2 \, dv \, dx + \frac{1}{2} \eps \int_\R | \pa_x \phi_\eps - \pa_x \phi_1 - \eps \pa_x \phi_2 |^2 \,dx \\
&+ \frac{1}{2} \int_{\R\times\R}  \left(\phi_\eps - \frac{1}{\eps}- \phi_1 - \eps \phi_2\right)^2 \, dx.
\end{aligned}
\end{equation}


\bigskip
\begin{ack}
The author would like to thank David Lannes for stimulating discussions on the long wave limit during the preparation of this work.
\end{ack}

\bibliographystyle{plain}
\bibliography{vptokdv}

\end{document}